\newtheorem{remark}[theorem]{Remark} 
\newcommand{\ee}{\text{\rm e}}
\newcommand{\wt}{\widetilde}
\title{Convergence analysis of Strang splitting for Vlasov-type equations
}
\newcommand*\samethanks[1][\value{footnote}]{\footnotemark[#1]}
\author{
Lukas Einkemmer\thanks{Department of Mathematics, University of Innsbruck, Technikerstra\ss e 13, Innsbruck, Austria ({\tt lukas.einkemmer@uibk.ac.at}, {\tt alexander.ostermann@uibk.ac.at}).
The first author was supported by a scholarship of the Vizerektorat f\"ur Forschung, University of Innsbruck, and by the Austrian Science Fund (FWF), project id: P25346.} \and Alexander Ostermann\samethanks
}
\begin{document}

\maketitle

\begin{abstract}
A rigorous convergence analysis of the Strang splitting algorithm for Vlasov-type equations in the
setting of abstract evolution equations is provided. It is shown that under suitable assumptions the convergence is of second order in the time step $\tau$. As an example, it is verified that the
Vlasov--Poisson equations in 1+1 dimensions fit into the framework of this analysis. Further, numerical experiments for the latter case are presented.
\end{abstract}

\begin{keywords}
Strang splitting, abstract evolution equations, convergence analysis, Vlasov--Poisson equations, Vlasov-type equations
\end{keywords}

\begin{AMS}
65M12, 82D10, 65L05
\end{AMS}

\pagestyle{myheadings}
\thispagestyle{plain}
\markboth{L.~EINKEMMER AND A.~OSTERMANN}{CONVERGENCE ANALYSIS OF STRANG SPLITTING}

\section{Introduction}

The most fundamental theoretical description of a (collisionless) plasma comes from the kinetic equation. This so called Vlasov equation is given by (see e.g.~\cite{Belli:2006}) 
\begin{equation*}
	\partial_t f(t,\boldsymbol{x},\boldsymbol{v}) + \boldsymbol{v} \cdot \nabla_{\boldsymbol{x}} f(t,\boldsymbol{x},\boldsymbol{v}) +
	\boldsymbol{F} \cdot \nabla_{\boldsymbol{v}} f(t,\boldsymbol{x},\boldsymbol{v})=0,
\end{equation*}
where $\boldsymbol{x}$ denotes the position and $\boldsymbol{v}$ the velocity.
The function $f$ describes a particle-probability distribution in the $3+3$ dimensional phase space. Since a plasma interacts with the electromagnetic field in a non-trivial manner, the Vlasov equation needs to be coupled to the electromagnetic field through the force term $\boldsymbol{F}$. A one-dimensional example is given in section~\ref{sec:vp} below.

Depending on the application, either the full Vlasov--Maxwell equations or a simplified model is appropriate. Such models include, for example, the Vlasov--Poisson and the gyrokinetic equations.

Due to the high dimensionality of the equations the most common numerical approach are so
called particle methods. In this class of methods, the phase space is left to be continuous and a (large) number of particles with various starting points are advanced in time. This is possible due to the structure of the equations, which implies that a single particle evolves along a trajectory given by an ordinary differential equation. A number of such methods have been developed, most notably the particle-in-cell (PIC) method. Such methods have
been extensively used for various applications (see e.g.~\cite{Fahey:2008}). The PIC scheme gives reasonable
results in case where the tail of the distribution is negligible. If this is not the case the method suffers from numerical noise that only decreases as ${1}/{\sqrt{n}}$, where $n$ denotes the number of particles (see e.g.~\cite{Heath:2011} or \cite{Filbet:2001}). Motivated by these considerations, a number of schemes employing discretization in phase space have been proposed. A comparison of various such methods can be found in \cite{Filbet:2001}.

Using a time splitting scheme for the Vlasov--Poisson equations was first proposed by \cite{Cheng:1776} in 1976. In \cite{Mangeney:2002} the method was extended to the Vlasov--Maxwell equations. In both cases, second-order Strang splitting (see e.g.~\cite{Jahnke00}) is used to advance the solution of the Vlasov equation in time.

Quite a few convergence results are available for semi-Lagrangian methods that employ Strang splitting. For example, in \cite{Besse:2005}, \cite{Besse:2008} and \cite{Respaud:2011} convergence is shown in the case of the 1+1 dimensional Vlasov--Poisson equations. Both \cite{Besse:2005} and \cite{Besse:2008} assume the same analytical framework, regarding the regularity of the solution, that we employ in section \ref{sec:vp}. However, the convergence proofs presented in these papers are based on the method of characteristics and are valid only if certain assumptions are made, which hold for the Vlasov--Poisson equations in combination with the specific scheme under consideration in those papers. This is in contrast to our analysis,  as we, for example, do not limit ourselves to a specific form of the auxiliary method (the technical details of this will be apparent in section \ref{sec:the_strang_splitting_algo}). The resulting convergence results for the Vlasov--Poisson equations, however, are similar to what we derive in section \ref{sec:vp}. 
Furthermore, the convergence of a special case of the one-dimensional Vlasov--Maxwell equation in the laser-plasma interaction context is investigated in~\cite{Bostan:2009}.

In this paper, we will consider a class of Vlasov-type equations as abstract evolution equation (i.e., without discretization in space).
In this context we will derive sufficient conditions such that the Strang splitting algorithm is convergent of
order $2$. We will then verify these conditions for the example of the Vlasov--Poisson equations in
1+1 dimensions and present some numerical results.

\section{Setting}

We will investigate the following (abstract) initial value problem
\begin{equation} \label{eq:abstract_ivp}
	\left\{
	\begin{aligned}
		f'(t)&=(A+B)f(t)  &  \\
		f(0)&=f_{0}.   \\
	\end{aligned}
	\right.
\end{equation}
We assume that $A$ is an (unbounded) linear operator and that the non-linearity $B$ has the form $Bf=B(f)f$, where $B(f)$ is an (unbounded) linear operator. We will consider this abstract initial value problem on a finite time interval $[0,T]$.

Problem~\eqref{eq:abstract_ivp} comprises the Vlasov--Poisson and the Vlasov--Maxwell equations for $A = -\boldsymbol{v}\cdot \nabla_{\boldsymbol{x}}$ and appropriately chosen $B$ as special cases. It is also general enough to include the gyrokinetic equations (as stated, for example, in \cite{Hahm:2009}).
The Vlasov--Poisson equations are considered in more detail in section \ref{sec:vp}.

\subsection{The Strang splitting algorithm} \label{sec:the_strang_splitting_algo}

Let $f_k\approx f(t_k)$ denote the numerical approximation to the solution of \eqref{eq:abstract_ivp} at time $t_k=k\tau$ with step size $\tau$. We assume that the differential equations $f' = Af$ and $g' = B_{k+1/2}g$, where $B_{k+1/2}$ is a suitable approximation to the operator $B\left(f(t_k+\frac{\tau}{2})\right)$, can be solved efficiently. In this paper we always make the choice $B_{k+1/2}=B(f_{k+1/2})$, where
\begin{equation}\label{eq:midpoint}
f_{k+1/2} = \Psi(\tfrac{\tau}2, f_k)
\end{equation}
is a first-order approximation to the solution of \eqref{eq:abstract_ivp} at time $t=t_k+\frac{\tau}2$. Note that $f_{k+1/2}$ typically depends on $f_k$ only. In the case of the Vlasov--Poisson equations, an appropriate choice is
\begin{equation*}
f_{k+1/2} = \ee^{\frac{\tau}2 B(f_k)}\ee^{\frac{\tau}{2}A}f_k
\end{equation*}
or even $f_{k+1/2} = \ee^{\frac{\tau}{2}A}f_k$, as will be explained in the first paragraph of section \ref{sec:implementation}.

The idea of Strang splitting is to advance the numerical solution by the recursion $f_{k+1} = S_k f_k$, where the (nonlinear) splitting operator $S_k$ is given by
\begin{equation}\label{eq:def-strang-one}
	S_k = \ee^{\frac{\tau}{2}A}\ee^{\tau B_{k+1/2}}\ee^{\frac{\tau}{2}A}.
\end{equation}
The precise conditions on $f_{k+1/2}$ for proving convergence are given in section~\ref{sec:convergence_abstract} below. Resolving this recursion, we can compute an approximation to the exact solution at time $T$ by
\begin{equation}\label{eq:def-strang}
	f_n = \left( \prod_{k=0}^{n-1} S_{k} \right) f_0 = S_{n-1}\cdots S_0f_0,
\end{equation}
where $n$ is an integer chosen together with the step size $\tau$ such that $T=n\tau$.

\subsection{Preliminaries}

For the convenience of the reader we collect some well known results that are used quite extensively
in section \ref{sec:convergence_abstract}.

To bound the remainder term $R_k(f)$ of a Taylor expansion
\begin{equation*}
	f(\tau) = f(0)+ \tau f'(0)+\ldots +\frac{\tau^{k-1}}{(k-1)!} f^{(k-1)}(0) + \tau^{k} R_k(f),
\end{equation*}
we will use the integral form
\begin{equation*}
	R_k(f) = \frac{1}{(k-1)!}\int_0^1 f^{(k)}(\tau s)(1-s)^{k-1}\,\mathrm{d}s,
\end{equation*}
where $k\geq 1$. Note that it is implicitly understood that $R_k$ is a function of $\tau$ as well. However, since we will work mostly with a fixed $\tau$, it is convenient to drop it in the notation of $R_k$. For
convenience we also define
\begin{equation*}
    R_0(f)=f(\tau).
\end{equation*}

For (unbounded) linear operators it is more convenient to work with the $\varphi$ functions instead of
the remainder term given above.

\begin{definition} [{\rm $\varphi$ functions}]  \label{def:varphi}
	Suppose that the linear operator $E$ generates a $\mathcal{C}_0$ semigroup. Then we define the bounded operators
	\begin{equation}
		\begin{aligned} \label{eq:varphi}
			\varphi_0(\tau E) &= \ee^{\tau E}, \\
			\varphi_k(\tau E) &= \int_0^1 \ee^{(1-\theta)\tau E}\frac{\theta^{k-1}}{(k-1)!} \,\mathrm{d}\theta \quad \text{for}\ k\geq 1.
		\end{aligned}
	\end{equation}
\end{definition}

Since we are merely interested in bounds of such functions, we will never directly employ the definition
given. Instead we will work exclusively with the following recurrence relation.

\begin{lemma}
	The $\varphi$ functions satisfy the recurrence relation
	\begin{equation}\label{eq:phi-rec}
		\varphi_k(\tau E) = \frac{1}{k!} + \tau E \varphi_{k+1}(\tau E), \quad k\ge 0
	\end{equation}
	and in particular (for $\ell\in\mathbb{N}$)
	\begin{equation*}
		\ee^{\tau E} = \sum_{k=0}^{\ell-1} \frac{\tau^k}{k!}E^k + \tau^\ell E^\ell \varphi_\ell(\tau E).
	\end{equation*}	
\end{lemma}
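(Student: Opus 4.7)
The statement is a standard identity for $\varphi$ functions, and my plan is essentially to verify it by elementary calculus from Definition~\ref{def:varphi}, with no functional-analytic subtleties beyond the fact that $\ee^{\sigma E}$ is strongly continuous in $\sigma$ on appropriate subspaces.

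First I would establish the recurrence \eqref{eq:phi-rec} for $k\ge 1$ by integration by parts in the integral defining $\varphi_k$. Taking $u=\ee^{(1-\theta)\tau E}$ and $dv=\frac{\theta^{k-1}}{(k-1)!}\,d\theta$, one has $du=-\tau E\,\ee^{(1-\theta)\tau E}\,d\theta$ and $v=\frac{\theta^k}{k!}$. The boundary term evaluates to $\frac{1}{k!}$ at $\theta=1$ and $0$ at $\theta=0$, while the remaining integral is precisely $\tau E\,\varphi_{k+1}(\tau E)$. This gives the identity for $k\ge 1$. The case $k=0$ is not covered by the integration-by-parts computation (since $\varphi_0=\ee^{\tau E}$ is defined separately, not as an integral); instead I would simply note that $\frac{d}{d\theta}\bigl(-\ee^{(1-\theta)\tau E}\bigr)=\tau E\,\ee^{(1-\theta)\tau E}$ and apply the fundamental theorem of calculus on $[0,1]$, which yields $\ee^{\tau E}-I=\tau E\,\varphi_1(\tau E)$, i.e.\ the $k=0$ case.

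Second, I would obtain the explicit Taylor-type expansion of $\ee^{\tau E}$ by a straightforward induction on $\ell$ using \eqref{eq:phi-rec}. The base case $\ell=1$ is exactly $\ee^{\tau E}=\varphi_0(\tau E)=I+\tau E\,\varphi_1(\tau E)$. For the inductive step, assuming the formula holds for $\ell$, I would substitute $\varphi_\ell(\tau E)=\frac{1}{\ell!}+\tau E\,\varphi_{\ell+1}(\tau E)$ into the remainder term $\tau^\ell E^\ell\,\varphi_\ell(\tau E)$, which produces the next term $\frac{\tau^\ell}{\ell!}E^\ell$ of the finite sum and leaves a remainder of the required form $\tau^{\ell+1}E^{\ell+1}\varphi_{\ell+1}(\tau E)$.

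The only minor care needed is domain-theoretic: since $E$ is unbounded, identities involving $E^k$ must be read on $D(E^k)$, and one should note that $\varphi_k(\tau E)$ is bounded for every $k\ge 0$ so that the recurrence makes sense in the strong operator sense on appropriate subspaces. This is not really an obstacle because the semigroup property and strong continuity on $D(E^k)$ are all that is used. No single step is hard; the essential computation is the integration by parts in the very first step, and everything else is bookkeeping.
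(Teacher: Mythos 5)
Your proposal is correct and follows essentially the same route as the paper: integration by parts in the defining integral gives the recurrence (with the $k=0$ case handled via $\varphi_0=\ee^{(\cdot)}$ and the fundamental theorem of calculus), and the expansion of $\ee^{\tau E}$ follows by applying the recurrence repeatedly. Your additional remarks on domains of $E^k$ are a reasonable refinement but do not change the argument.
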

\begin{proof}
	The first relation follows from integration by parts applied to \eqref{eq:varphi}. The second one results from using $\varphi_0 = \ee^{(\cdot)}$ and applying the first relation repeatedly.
\end{proof}

The $\varphi$ functions are used to expand the exponential of some linear operator. In the sense of the previous lemma, these functions play the same role for an exponential of a linear operator as does the remainder term in Taylor's theorem.

Suppose that the differential equation $g' = G(g)$ has (for a given initial value) a unique solution. In this case we denote the solution at time $t$ with initial value $g(t_0)=g_0$ with the help of the evolution operator, i.e. $g(t)=E_G(t-t_0,g_0)$.

The Gr\"obner--Alekseev formula (also called the nonlinear variation-of-constants formula) will be employed quite heavily.

\begin{theorem} [{\rm Gr\"obner--Alekseev formula}] Suppose that there exists a unique $f$ satisfying
\begin{equation*}
	\left\{
	\begin{aligned}
		f^{\prime}(t) &= G(f(t))+R(f(t)) \\
		f(0)          &= f_0
	\end{aligned}
	\right.
\end{equation*}
and that $g' = G(g)$ has (for a given initial value) a unique solution. Then it holds that
	\begin{equation*}
		f(t)=E_{G}(t,f_{0})+\int_{0}^{t}\partial_{2}E_{G}\left(t-s,f(s)\right)R\left(f(s)\right)\, \mathrm{d}s.
	\end{equation*}
\end{theorem}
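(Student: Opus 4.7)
The plan is to use the standard trick of introducing the auxiliary function $\phi(s) = E_G(t-s, f(s))$ for $s \in [0,t]$ and showing that the identity $f(t) = E_G(t,f_0) + \int_0^t \partial_2 E_G(t-s,f(s)) R(f(s))\,\mathrm{d}s$ falls out of the fundamental theorem of calculus applied to $\phi$. Observe first that at the endpoints $\phi(0) = E_G(t,f_0)$ and $\phi(t) = E_G(0,f(t)) = f(t)$, so $f(t) - E_G(t,f_0) = \int_0^t \phi'(s)\,\mathrm{d}s$, and the whole task reduces to identifying $\phi'(s)$ with the integrand claimed in the theorem.

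Differentiating $\phi$ by the chain rule gives
\begin{equation*}
\phi'(s) = -\partial_1 E_G(t-s,f(s)) + \partial_2 E_G(t-s,f(s))\, f'(s),
\end{equation*}
and since $f$ solves the perturbed equation we may substitute $f'(s) = G(f(s)) + R(f(s))$. The key algebraic step is then to rewrite $\partial_1 E_G$ in terms of $\partial_2 E_G$. This comes from the flow identity $E_G(t-s, E_G(s,y)) = E_G(t,y)$, which, differentiated in $s$, yields
\begin{equation*}
\partial_1 E_G(r,z) = \partial_2 E_G(r,z)\, G(z)
\end{equation*}
at any point $z$ in the range of the flow, in particular at $z = f(s)$. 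Substituting this into the expression for $\phi'(s)$, the $G(f(s))$ contributions cancel and leave exactly $\phi'(s) = \partial_2 E_G(t-s,f(s))\, R(f(s))$. Integrating from $0$ to $t$ then delivers the formula.

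The main obstacle I expect is not the algebraic manipulation but justifying that the derivatives $\partial_1 E_G$ and $\partial_2 E_G$ exist and are continuous in the abstract (possibly unbounded-operator) setting of the paper, so that the chain rule above is legitimate. Existence of $\partial_1 E_G$ is essentially the statement that $E_G(\cdot,y)$ solves the ODE and hence equals $G(E_G(\cdot,y))$ in the appropriate sense. Existence and continuity of $\partial_2 E_G$ (differentiability of the flow in the initial datum) is a stronger regularity statement and would typically be guaranteed by assuming that $G$ is sufficiently smooth on the relevant phase space, or by working on a solution manifold where such smoothness has been established a priori. Once this regularity is in hand, the integration $\int_0^t \phi'(s)\,\mathrm{d}s$ is standard, and no further delicate estimates are needed for the identity itself.
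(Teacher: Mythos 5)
Your proposal is correct and is essentially the paper's own argument: both define the auxiliary function $E_G(t-s,f(s))$, apply the fundamental theorem of calculus, and cancel the $G$-terms via the identity $\partial_1 E_G(r,z)=\partial_2 E_G(r,z)\,G(z)$ (which the paper obtains by differentiating $E_G(t-s,u(s))=u(t)$ along a solution $u$ of the unperturbed equation with $u(s)=f(s)$ — the same computation as your differentiation of the flow identity). Your closing remark about the regularity of $\partial_2 E_G$ correctly identifies the point that the paper also leaves implicit.
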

\begin{proof}
    For linear (and possibly unbounded) $G$, this formula is proved in \cite{Holden2012} by the fundamental theorem of calculus. Here, we prove the extension to nonlinear $G$. Let us assume that $u(t)$ is a solution of $u^{\prime}(t)=G\left(u(t)\right)$. By differentiating
	\begin{equation*}
		E_{G}\left(t-s,u(s))\right)=u(t)
	\end{equation*}
	with respect to $s$ we get
	\begin{equation*}
		 -\partial_{1}E_{G}\left(t-s,u(s)\right)+\partial_{2}E_{G}\left(t-s,u(s)\right)G\left(u(s)\right)=0.
	\end{equation*}
	The initial value of $u$ is now chosen such that $u(s)=f(s)$ which implies
	\begin{equation*}
		 -\partial_{1}E_{G}\left(t-s,f(s)\right)+\partial_{2}E_{G}\left(t-s,f(s)\right)G\left(f(s)\right)=0.
	\end{equation*}
	Altogether we have for $\psi(s)=E_G(t-s,f(s))$ (by the fundamental theorem of calculus)
	\begin{align*}
		f(t) - E_G(t,f_0) &= \int_0^t \psi'(s)\,\mathrm{d}s \\
        &=\int_0^t\Bigl(-\partial_1 E_G(t-s,f(s))+ \partial_2 E_G(t-s,f(s))f'(s)\Bigl)\,\mathrm{d}s\\
        &= \int_0^t \partial_{2}E_{G}\left(t-s,f(s)\right)R\left(f(s)\right)\,\mathrm{d}s,
	\end{align*}
	as desired.
\end{proof}

Since anticommutator relations appear quite naturally in some expansions, we will employ the notation
\begin{equation*}
	\left\{ E_1, E_2 \right\} = E_1E_2+E_2E_1,
\end{equation*}
for linear operators $E_1$ and $E_2$ (on a suitable domain).

In what follows $C$ will denote a generic constant that may have different values at different occurrences.

\section{Convergence analysis in the abstract setting} \label{sec:convergence_abstract}

The problem of splitting an evolution equation into two parts, governed by linear and possibly unbounded operators, has already been investigated in some detail. In \cite{HanO2008} it is shown that splitting methods with a given classical order retain this order in the stiff case (under suitable regularity assumptions).

An alternative analysis for Strang splitting in the linear case is given in \cite{Jahnke00}. The approach presented there is more involved, however, it demands less regularity on the solution. The purpose of this section is to extend this analysis to the abstract initial value problem given by \eqref{eq:abstract_ivp}.

\subsection{Convergence}

Our convergence proof will be carried out in an abstract Banach space $X$ with norm $\|\cdot\|_X$. It relies on the classical concepts of consistency and stability. We begin by stating a suitable notion of consistency for our splitting operator. For this purpose, let
\begin{equation*}
\wt B_{k+1/2} = B\left(f(t_k+\tfrac{\tau}2)\right)
\end{equation*}
denote the non-linearity, evaluated at the exact solution at time $t_k+\frac{\tau}2$. With the help of this operator, we consider the auxiliary scheme
\begin{equation*}
	\wt S_k = \ee^{\frac{\tau}{2}A}\ee^{\tau \wt B_{k+1/2}}\ee^{\frac{\tau}{2}A}.
\end{equation*}
We are now in the position to define consistency for our numerical method.

\begin{definition} [{\rm Consistency of order $p$}]
The Strang splitting algorithm \eqref{eq:def-strang-one} is consistent of order $p$ if
\begin{equation}\label{eq:bd-cons}
\Vert f(t_k+\tau)-\wt S_kf(t_k) \Vert_X \leq C \tau^{p+1}.
\end{equation}
The constant $C$ depends on the considered problem but is independent of $\tau$ and $k$ for $0\le t_k = k\tau \le T$.
\end{definition}

Note that for algorithm \eqref{eq:def-strang-one}, the order of consistency is not necessarily $p=2$. The actual order depends on the properties of the involved operators, and order reduction can occur even in the linear case, see~\cite{Jahnke00}.

To estimate the global error, i.e. $f_{k+1}-f(t_{k+1})$, we employ the error recursion
\begin{equation}\label{eq:error-recu}
f_{k+1}-f(t_{k+1}) = S_k\bigl(f_k-f(t_k)\bigr) + (S_k-\wt S_k)f(t_k) + \wt S_k f(t_k) - f(t_{k+1}).
\end{equation}
The first two terms on the right-hand side of \eqref{eq:error-recu} are controlled by the linear and non-linear stability properties of the method, whereas the last difference is controlled by the consistency bound. For our abstract convergence result, we have to assume the stability bound
\begin{equation}\label{eq:stability}
\Vert S_k \Vert_{X\gets X} \leq 1+C\tau
\end{equation}
and the Lipschitz condition
\begin{equation}\label{eq:L-cond}
\Vert S_k -\wt S_k\bigl\Vert_{X\gets X} \leq C\tau\|f_{k+1/2}-f(t_k+\tfrac{\tau}2)\bigr\|_X
\end{equation}
with a constant $C$ that is uniform in $\tau$ and $k$ for $0\le t_k = k\tau\le T$. These bounds will be verified in section~\ref{subsec:stability} for the particular case of the Vlasov--Poisson equations.

We are now in the position to bound the global error.

\begin{theorem} [{\rm Convergence}] \label{thm:convergence-abstract}
Suppose that the Strang splitting scheme \eqref{eq:def-strang-one} is consistent of order $p$ and satisfies the  bounds~\eqref{eq:stability} and~\eqref{eq:L-cond}. Further assume that the auxiliary method~\eqref{eq:midpoint} is consistent of order~$p-1$ and (locally) Lipschitz continuous with respect to its second argument. Then the Strang splitting scheme~\eqref{eq:def-strang-one} is convergent of order~$p$, i.e.
\begin{equation}
\|f_k-f(t_k)\|_X\le C\tau^p
\end{equation}
with a constant $C$ that is independent of $\tau$ and $k$ for $0\le t_k = k\tau\le T$.
\end{theorem}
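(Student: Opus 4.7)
The plan is to apply a standard Lady Windermere's fan argument to the error recursion~\eqref{eq:error-recu}. Setting $e_k = f_k - f(t_k)$ and taking norms in~\eqref{eq:error-recu}, I use the stability bound~\eqref{eq:stability} on the first term and the consistency hypothesis~\eqref{eq:bd-cons} on the last term to obtain
\[
\|e_{k+1}\|_X \le (1+C\tau)\,\|e_k\|_X + \|(S_k - \wt S_k) f(t_k)\|_X + C\tau^{p+1}.
\]
The only remaining work is to bound the middle term by a multiple of $\|e_k\|_X$ plus $\tau^{p+1}$.

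That estimate is where the nonlinearity enters. The Lipschitz condition~\eqref{eq:L-cond} gives
\[
\|(S_k - \wt S_k) f(t_k)\|_X \le C\tau \bigl\|f_{k+1/2} - f(t_k+\tfrac{\tau}{2})\bigr\|_X,
\]
and since $f_{k+1/2} = \Psi(\tfrac{\tau}{2}, f_k)$ I would insert $\Psi(\tfrac{\tau}{2}, f(t_k))$ and split by the triangle inequality,
\[
\bigl\|f_{k+1/2} - f(t_k+\tfrac{\tau}{2})\bigr\|_X \le \bigl\|\Psi(\tfrac{\tau}{2}, f_k) - \Psi(\tfrac{\tau}{2}, f(t_k))\bigr\|_X + \bigl\|\Psi(\tfrac{\tau}{2}, f(t_k)) - f(t_k+\tfrac{\tau}{2})\bigr\|_X.
\]
The first piece is at most $L\|e_k\|_X$ by the assumed local Lipschitz continuity of $\Psi$ in its second argument, while the second piece is at most $C\tau^p$ by the consistency of $\Psi$ of order $p-1$.

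Combining these estimates yields
\[
\|e_{k+1}\|_X \le (1+C\tau+CL\tau^2)\|e_k\|_X + C\tau^{p+1} \le (1+C\tau)\|e_k\|_X + C\tau^{p+1}
\]
for $\tau$ sufficiently small, after which a discrete Gronwall argument with $e_0=0$ together with $(1+C\tau)^n \le \ee^{CT}$ for $n\tau\le T$ delivers $\|e_n\|_X \le C\tau^p$, as claimed.

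Once all the hypotheses are in hand the argument is essentially mechanical, so the main obstacle is conceptual: one has to recognise that consistency of only order $p-1$ for the auxiliary method $\Psi$ is sufficient. The reason is that the extra $\tau$-factor produced by the nonlinear Lipschitz bound~\eqref{eq:L-cond} already upgrades the $\tau^p$ local error of $\Psi$ to a $\tau^{p+1}$ contribution, so this term matches (and does not degrade) the local consistency error of the Strang scheme itself.
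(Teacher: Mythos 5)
Your argument is correct and is essentially identical to the paper's proof: the same error recursion, stability and consistency bounds, the same splitting of $f_{k+1/2}-f(t_k+\tfrac{\tau}{2})$ via the Lipschitz continuity and order-$(p-1)$ consistency of $\Psi$, and a discrete Gronwall step. (The only blemish is the factor $CL\tau^2$ in your combined recursion, which should be $CL\tau$ since \eqref{eq:L-cond} contributes $C\tau\cdot L\|e_k\|_X$; this is harmless, as either term is absorbed into $1+C\tau$.)
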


\begin{proof}
The proof is quite standard. We apply the triangle inequality to the error recursion~\eqref{eq:error-recu} and insert the bounds~\eqref{eq:stability}, \eqref{eq:L-cond}, and the consistency bound~\eqref{eq:bd-cons}. By our assumptions on method \eqref{eq:midpoint}, we further obtain
\begin{align*}
\|f_{k+1/2}-f(t_k+\tfrac{\tau}2)\|_X &= 
\|\Psi(\tfrac{\tau}2,f_k) - \Psi(\tfrac{\tau}2,f(t_k))+ \Psi(\tfrac{\tau}2,f(t_k)) - f(t_k+\tfrac{\tau}2)\|_X\\
&\le C\|f_k-f(t_k)\|_X + C\tau^p.
\end{align*}
This finally results in the recursion
\begin{equation*}
\|f_{k+1}-f(t_{k+1})\|_X \le (1+C\tau)\|f_k-f(t_k)\|_X + C\tau^{p+1}
\end{equation*}
which is easily solved. Employing $f_0=f(0)$ we obtain the desired bound.
\end{proof}

\subsection{Consistency}

It is the purpose of this section to formulate assumptions under which the consistency bound~\eqref{eq:bd-cons} holds for the abstract initial value problem \eqref{eq:abstract_ivp}. To make the derivations less tedious we will adhere to the notation laid out in the following remark.

\begin{remark} \label{rem:notation}
In this section we will denote the solution of \eqref{eq:abstract_ivp} at a fixed time $t_k$ by $f_0$. The notation $f(s)$ is then understood to mean $f(t_k+s)$. The function $f_0$ is a (possible) initial value for a single time step (i.e., a single application of the splitting operator $S_k$). It is not, in general, the initial value of the solution to the abstract initial value problem as in the previous sections. If we assert that a bound holds uniformly in $t_k$, it is implied that it holds for all $f_0$ in the sense defined here (remember that $t_k\in[0,T]$). Since $t_k$ is fixed we will use the notation $\wt B$ and $\wt S$ instead of $\wt B_{k+1/2}$ and $\wt S_k$, respectively.
\end{remark}

Let us start with expanding the exact solution by using the Gr\"obner--Alekseev formula (this has been proposed in the context of the nonlinear Schr\"odinger equation in \cite{lubich:2008}). We consider the linear operator $A$ as a perturbation of the differential equation given by the non-linear operator $B$. This choice is essential for the treatment given here, since it allows us to apply the expansion sequentially without any additional difficulties.

\begin{lemma} [{\rm Expansion of the exact solution}] \label{le:exactsol}
The exact solution of \eqref{eq:abstract_ivp} has the formal expansion
\begin{equation*}
\begin{aligned}
f(\tau) &= E_{B}(\tau,f_{0})+\int_{0}^{\tau}\partial_{2}E_{B}(\tau-s,f(s))AE_{B}(s,f_{0})\,\mathrm{d}s  \\
&\quad + \int_{0}^{\tau}\int_{0}^{s}\partial_{2}E_{B}(\tau-s,f(s))A\partial_{2}E_{B}(s-\sigma,f(\sigma))
AE_{B}(\sigma,f_{0})\,  \mathrm{d}\sigma \mathrm{d}s \\
&\quad +  \int_{0}^{\tau}\int_{0}^{\sigma_1}\int_{0}^{\sigma_2}
\left( \prod_{k=0}^2 \partial_2 E_B(\sigma_k-\sigma_{k+1},f(\sigma_{k+1}))A \right) f(\sigma_3) \, \mathrm{d}\sigma_3 \mathrm{d}\sigma_2 \mathrm{d}\sigma_1,
\end{aligned}
\end{equation*}
where $\sigma_0=\tau$.
\end{lemma}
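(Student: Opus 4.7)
The plan is to apply the Gr\"obner--Alekseev formula three times in succession, treating the (linear, possibly unbounded) operator $A$ as a perturbation of the nonlinear flow $E_B$ generated by $B$. Taking $G(f)=B(f)f$ and $R(f)=Af$ in the formula immediately yields
\begin{equation*}
f(\tau) = E_B(\tau,f_0) + \int_0^\tau \partial_2 E_B(\tau-s,f(s))\,A f(s)\,\mathrm{d}s,
\end{equation*}
which is the starting point of the expansion and already produces the first term of the claim.

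Next, I would substitute the Gr\"obner--Alekseev formula for the factor $f(s)$ on which $A$ acts, writing
\begin{equation*}
f(s) = E_B(s,f_0) + \int_0^s \partial_2 E_B(s-\sigma,f(\sigma))\,A f(\sigma)\,\mathrm{d}\sigma,
\end{equation*}
and splitting the result into two summands. The first summand reproduces exactly the second term of the lemma; the second is a new double-integral remainder that still contains an unexpanded $f(\sigma)$. Repeating the same substitution on this $f(\sigma)$ produces the third (double-integral) term of the statement and leaves a triple-integral remainder in which the final appearance of $f(\sigma_3)$ is left unexpanded.

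The main task---essentially bookkeeping---is to verify that after the three substitutions the remainder collects into the compact product notation $\prod_{k=0}^{2}\partial_2 E_B(\sigma_k-\sigma_{k+1},f(\sigma_{k+1}))\,A$ with $\sigma_0=\tau$. This follows by a short induction: each substitution appends, to the right of the existing chain, one additional factor of the form $\partial_2 E_B(\cdot,\cdot)\,A$ acting on the newly introduced evaluation of $f$, while the nested integration limits $\tau\ge\sigma_1\ge\sigma_2\ge\sigma_3$ are generated automatically from the Gr\"obner--Alekseev integrals. Since the identity is purely formal at this stage, domain and integrability questions for the unbounded operators $A$ and $\partial_2 E_B$ are deferred to the concrete setting verified later (for the Vlasov--Poisson equations in Section~\ref{sec:vp}); here it suffices that the successive arguments of $A$ lie in its domain so that the nested compositions are well defined.
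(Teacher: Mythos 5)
Your proposal is correct and is exactly the paper's argument: the paper's proof consists of the single sentence ``Apply the Gr\"obner--Alekseev formula three times to equation \eqref{eq:abstract_ivp},'' with $G=B$ and $R=A$, and your write-up simply spells out the recursive substitution and the (linear) splitting under $\partial_2 E_B$ that this entails. The bookkeeping leading to the product notation with $\sigma_0=\tau$ is verified correctly.
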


\begin{proof}	
Apply the Gr\"obner--Alekseev formula three times to equation \eqref{eq:abstract_ivp}.
\end{proof}

Next we expand the splitting operator $\wt S$ in a form that is suitable for comparison with the exact solution.

\begin{lemma} [{\rm Expansion of the splitting operator}] \label{le:approxsol}
The splitting operator $\wt S$ has the formal expansion
\begin{equation*}
\wt Sf_{0} = \ee^{\tau \wt B}f_0+\frac{\tau}{2}\left\{ A,\ee^{\tau\wt B}\right\} f_{0}+
\frac{\tau^{2}}{8}\left\{ A,\left\{ A,\ee^{\tau\wt B}\right\} \right\} f_{0}+R_{3}f_{0},
\end{equation*}
where
\begin{equation*}
R_{3}=\frac{\tau^{3}}{16}\int_{0}^{1}\left(1-\theta\right)^{2} \left\{ A,\left\{ A,\left\{ A,\ee^{\frac{\tau\theta}{2}A}\ee^{\tau\wt B}\ee^{\frac{\tau\theta}{2}A}\right\} \right\} \right\} \, \mathrm{d}\theta.
\end{equation*}
\end{lemma}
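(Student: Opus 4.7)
The plan is to reduce the whole expansion to a single application of Taylor's theorem with integral remainder in a scalar parameter. Introduce the operator-valued function
\begin{equation*}
h(\theta) = \ee^{\frac{\tau\theta}{2}A}\ee^{\tau\wt B}\ee^{\frac{\tau\theta}{2}A}, \qquad \theta\in[0,1],
\end{equation*}
so that $\wt S = h(1)$ while $h(0)=\ee^{\tau\wt B}$. The goal is to expand $h(1)f_0$ around $\theta=0$.

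First I would differentiate $h$ with respect to $\theta$. On elements of a sufficiently regular subspace (on which the formal manipulations below are justified, since the lemma asserts only a formal expansion), the product rule for the $\mathcal{C}_0$ semigroup generated by $A$ yields
\begin{equation*}
h'(\theta) = \tfrac{\tau}{2}\bigl(A\,h(\theta) + h(\theta)\,A\bigr) = \tfrac{\tau}{2}\{A,h(\theta)\}.
\end{equation*}
Iterating this identity (using that differentiation commutes with the anticommutator against the fixed operator $A$) gives
\begin{equation*}
h^{(k)}(\theta) = \bigl(\tfrac{\tau}{2}\bigr)^k \underbrace{\{A,\{A,\ldots\{A}_{k\ \text{times}},h(\theta)\}\ldots\}\},\qquad k\ge 0.
\end{equation*}
In particular, at $\theta=0$ the inner operator is $\ee^{\tau\wt B}$, which will produce exactly the anticommutator expressions appearing in the statement.

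Next I would apply Taylor's theorem in integral form (as recorded in the Preliminaries) to $\theta\mapsto h(\theta)f_0$ on $[0,1]$, truncated after the quadratic term:
\begin{equation*}
h(1)f_0 = h(0)f_0 + h'(0)f_0 + \tfrac{1}{2}h''(0)f_0 + \tfrac{1}{2}\int_0^1 (1-\theta)^2 h'''(\theta)f_0\,\mathrm{d}\theta.
\end{equation*}
Substituting the formulas for $h^{(k)}(0)$ and $h'''(\theta)$ derived above yields coefficients $1$, $\tau/2$, $\tfrac{1}{2}(\tau/2)^2=\tau^2/8$ and, for the remainder, $\tfrac{1}{2}(\tau/2)^3=\tau^3/16$, which matches the claimed identity exactly.

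The only delicate point is the justification of $h'(\theta)=\tfrac{\tau}{2}\{A,h(\theta)\}$ when $A$ is unbounded: one must ensure that $\ee^{\tau\wt B}$ preserves enough regularity for $A$, $A^2$ and $A^3$ to act, and that the semigroup property $\tfrac{\mathrm{d}}{\mathrm{d}\theta}\ee^{\frac{\tau\theta}{2}A}u=\tfrac{\tau}{2}A\ee^{\frac{\tau\theta}{2}A}u$ holds on such regular elements. This is exactly the kind of domain issue that the statement sidesteps by calling the expansion \emph{formal}; in the concrete setting of Section~\ref{sec:vp} it will be verified when the operators act on sufficiently smooth functions, so I would simply remark that the identity is to be interpreted on a common dense invariant subspace and leave the verification for the concrete application.
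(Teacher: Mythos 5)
Your proposal is correct and is essentially the paper's own proof: the paper sets $g(s)=\ee^{\frac{1}{2}sA}\ee^{\tau\wt B}\ee^{\frac{1}{2}sA}$ with $\wt S=g(\tau)$, computes the same nested-anticommutator derivatives, and applies Taylor's theorem with integral remainder, your $h(\theta)$ being just the reparametrization $g(\tau\theta)$. The coefficients and remainder term you obtain match exactly.
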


\begin{proof}
Let us define the function $g(s)=\ee^{\frac{1}{2}sA}\ee^{\tau\wt B}\ee^{\frac{1}{2} sA}$.
The first three derivatives of $g$ are given by
\begin{eqnarray*}
g^{\prime}(s) &=& \frac{1}{2} \left\{A,g(s)\right\}, \\
g^{\prime \prime}(s) &=& \frac{1}{4} \left\{A,\left\{A,g(s)\right\}\right\}, \\
g^{(3)}(s) &=& \frac{1}{8} \left\{A,\left\{A,\left\{A,g(s)\right\}\right\}\right\}.
\end{eqnarray*}
From the observation that $\wt S=g(\tau)$ and by Taylor's theorem we obtain the result.
\end{proof}

Let us now give the conditions which, if satisfied, imply that the Strang splitting scheme, in our abstract setting, is consistent of order two.

\begin{theorem} [{\rm Consistency}] \label{thm:abstract_consistency}
Suppose that the estimates

\begin{eqnarray}
	\left\Vert \varphi_{1}^{\delta_{i1}}(\wt B) \left(B(E_{B}(\tfrac{\tau}{2},f_0))-\tilde{B}\right)R_{1}^{\delta_{i0}}\left(E_{B}(\cdot,f_{0})\right)  \right\Vert_X
	&\leq& C \tau, \quad i \in \{0,1\}  \label{eq:ass_-1} \\
	\sup_{0\leq s \leq \tau} 
	\left\Vert
		\frac{\mathrm{d}^2}{\mathrm{d}s^2}  
		\ee^{s\wt B}\big(B(E_B(s,f_0))-\wt B \big)u(s) 	
	\right\Vert_X
 &\leq& C, \label{eq:ass_-0.5} \\
	\left\Vert \left[ B\left(E_B(\tfrac{\tau}{2},f_0)\right)-\wt B + \tfrac{\tau}{2} B^{\prime}(Af_0) \right] f_0 \right\Vert_X 
&\leq& C \tau^2, \label{eq:ass_0}
\end{eqnarray}
and
\begin{eqnarray}
\sup_{0\le s\le\tau}\left\Vert A^i \ee^{(\tau-s)\wt B}(B-\wt B)E_B(s,f_0) \right\Vert_X & \leq & C\tau^{2-i}, \quad i\in\{1,2\} \label{eq:ass_1} \\
\left\Vert  (B(f_0)-\wt B)Af_0 \right\Vert_X & \leq & C\tau, \label{eq:ass_2} \\
\left\Vert A^{\delta_{i2}} \wt B^{1+\delta_{i0}}\varphi_{1+\delta_{i0}}(\tau\wt B)A^{1+\delta_{i1}}f_0 \right\Vert_X & \leq & C, \quad i\in\{0,1,2\} \label{eq:ass_3} \\
\left\Vert A^{\delta_{i2}} R_{1+\delta_{i0}}(\partial_2 E_B(\cdot,f_0))A^{1+\delta_{i1}}f_0 \right\Vert_X & \leq & C, \quad i\in\{0,1,2\} \label{eq:ass_4}
\end{eqnarray}		
hold uniformly in $t$, where $\delta_{ij}$ denotes the Kronecker delta. In addition, suppose that the estimates
\begin{eqnarray}
\sup_{0\le s\le \tau}\left\Vert\frac{\mathrm{d^2}}{\mathrm{d}s^2}\Bigl( \partial_2 E_B(\tau-s, f(s))A E_B(s,f_0)\Bigr) \right\Vert_X& \leq & C, \label{eq:ass_5a} \\
\sup_{0\le\sigma\le s\le\tau}\left\Vert\frac{\partial}{\partial s}\Bigl(\partial_{2}E_{B}(\tau-s,f(s))A\partial_{2} E_{B}(s-\sigma,f(\sigma))A E_{B}(\sigma,f_{0})\Bigr)\right\Vert_X & \le & C, \label{eq:ass_5b} \\
\sup_{0\le\sigma\le s\le\tau}\left\Vert\frac{\partial}{\partial \sigma}\Bigl(\partial_{2}E_{B}(\tau-s,f(s))A \partial_{2} E_{B}(s-\sigma,f(\sigma))A E_{B}(\sigma,f_{0})\Bigr)\right\Vert_X & \le & C, \qquad\label{eq:ass_5c} \\
\biggl\Vert\biggl( \prod_{k=0}^2 \partial_2 E_B(\sigma_k-\sigma_{k+1},f(\sigma_{k+1}))A \biggr) f(\sigma_3) \biggr\Vert_X & \leq & C \label{eq:ass_6}, \\
\sup_{0\le s\le \tau}\left\Vert \left\{ A,\left\{ A,\left\{ A,e^{\frac{s}{2}A}e^{\tau B}e^{\frac{s}{2}A}\right\} \right\} \right\}f_0 \right\Vert_X & \leq & C, \label{eq:ass_7}
\end{eqnarray}
hold uniformly in $t$ for $0\le \sigma_3\le \sigma_2\le \sigma_1\le \sigma_0 = \tau$.

Then the Strang splitting \eqref{eq:def-strang-one} is consistent of order 2.
\end{theorem}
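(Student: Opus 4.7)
My plan is to compare the Gr\"obner--Alekseev expansion of $f(\tau)$ from Lemma~\ref{le:exactsol} with the Taylor expansion of $\wt S f_0$ from Lemma~\ref{le:approxsol}, matching the terms by their number of occurrences of $A$. The three principal parts on the exact-solution side---zero, one, and two factors of $A$---should pair with $\ee^{\tau \wt B} f_0$, $\tfrac{\tau}{2}\{A, \ee^{\tau \wt B}\} f_0$ and $\tfrac{\tau^2}{8}\{A,\{A, \ee^{\tau \wt B}\}\} f_0$, respectively, each pair differing by $O(\tau^3)$. The two cubic-in-$A$ remainders---the triple integral from Lemma~\ref{le:exactsol} and the operator $R_3$ from Lemma~\ref{le:approxsol}---are bounded directly by \eqref{eq:ass_6} and \eqref{eq:ass_7}.

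For the zeroth-order-in-$A$ pair I would view $(B(\cdot) - \wt B)$ as a perturbation of the linear flow generated by $\wt B$ and apply the Gr\"obner--Alekseev formula, obtaining
\begin{equation*}
E_B(\tau,f_0) - \ee^{\tau \wt B} f_0 = \int_0^\tau \ee^{(\tau-s)\wt B}\bigl(B(E_B(s,f_0)) - \wt B\bigr) E_B(s,f_0)\,\mathrm{d}s.
\end{equation*}
Expanding the integrand around $s = \tau/2$ by a symmetric second-order quadrature, whose remainder is controlled by \eqref{eq:ass_-0.5}, and rewriting the midpoint value with the help of \eqref{eq:ass_-1} and \eqref{eq:ass_0}, the integral reduces to $-\tfrac{\tau^2}{2} B'(A f_0) f_0$ plus an $O(\tau^3)$ remainder. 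This residue is \emph{not} of the required order on its own: it will have to be absorbed by a matching term produced by the first-order-in-$A$ analysis.

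For the first-order-in-$A$ pair I would apply a symmetric trapezoidal rule to $\int_0^\tau \partial_2 E_B(\tau - s, f(s)) A E_B(s, f_0)\,\mathrm{d}s$, whose quadrature error is $O(\tau^3)$ by \eqref{eq:ass_5a}. The two endpoints $\partial_2 E_B(\tau, f_0) A f_0$ and $A E_B(\tau, f_0)$ are then compared with $\ee^{\tau \wt B} A f_0$ and $A \ee^{\tau \wt B} f_0$ through another Gr\"obner--Alekseev step whose residue is controlled by \eqref{eq:ass_1} and \eqref{eq:ass_2}. After the $\tau/2$ prefactor this reproduces $\tfrac{\tau}{2}\{A, \ee^{\tau \wt B}\} f_0$ modulo $O(\tau^3)$ and, crucially, produces precisely the term that cancels the residue left over from the zeroth-order step. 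The double integral is handled analogously via a midpoint quadrature controlled by \eqref{eq:ass_5b}--\eqref{eq:ass_5c} together with $\varphi$-function replacements governed by \eqref{eq:ass_3} and \eqref{eq:ass_4}, producing $\tfrac{\tau^2}{8}\{A,\{A,\ee^{\tau \wt B}\}\} f_0$ up to an $O(\tau^3)$ remainder.

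The hardest part is the nonlinear analogue of the commutator cancellation that underlies the classical second-order consistency of Strang splitting. Because $\wt B = B(f(t_k + \tfrac{\tau}{2}))$ differs from $B(E_B(\tfrac{\tau}{2}, f_0))$ by a genuinely $O(\tau)$ quantity, the zeroth-order comparison alone cannot do better than $O(\tau^2)$; one has to track the leading $-\tfrac{\tau^2}{2} B'(A f_0) f_0$ contribution and verify that it \emph{exactly} matches the derivative term produced when one freezes $B$ inside the first-order integral. Assumption \eqref{eq:ass_0} is tailored to make this matching explicit, and the bookkeeping required to align the two expansions at this level of detail---rather than any individual estimate---is the delicate step of the argument.
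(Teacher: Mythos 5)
Your proposal follows essentially the same route as the paper: a term-by-term comparison of the Gr\"obner--Alekseev expansion with the Taylor expansion of $\wt S$, quadrature rules (midpoint, trapezoidal, and a corner rule on the triangle) whose errors are controlled by \eqref{eq:ass_-0.5} and \eqref{eq:ass_5a}--\eqref{eq:ass_5c}, endpoint comparisons via \eqref{eq:ass_1}--\eqref{eq:ass_4}, remainders via \eqref{eq:ass_6}--\eqref{eq:ass_7}, and the key cancellation between the $O(\tau^2)$ residue of the zeroth-order comparison and the $-\tfrac{\tau^2}{2}B'(Af_0)f_0$ term from the first-order endpoint, which is exactly what \eqref{eq:ass_0} encodes. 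The only bookkeeping slip is that \eqref{eq:ass_3} and \eqref{eq:ass_4} with $i=0$ are already needed for the $g(0)-k(0)$ endpoint of the single integral, not only for the double integral; otherwise the sketch matches the paper's proof.
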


\begin{proof}
We have to compare terms of order $0$, $1$, and $2$ in Lemma \ref{le:exactsol} and Lemma~\ref{le:approxsol} and show that the remaining terms of order $3$ can be bounded as well.

%
%

\emph{Terms of order $0$}.
We have to bound the difference
\begin{equation}\label{eq:zerodiff}
\ee^{\tau\wt B}f_{0}-E_{B}(\tau,f_{0}).
\end{equation}
For this purpose we denote $E_B(s,f_0)$ by $u(s)$ and make use of the fact that $u$ satisfies the differential equation
\begin{equation*}
u^{\prime} = \wt B u+(B-\wt B)u
\end{equation*}
with initial value $f_0$. Employing the variation-of-constants formula we get
\begin{equation*}
u(\tau) = \ee^{\tau \wt B}f_0 + \int_0^\tau \ee^{(\tau-s)\wt B}(B-\wt B)E_B(s,f_0)\,\mathrm{d}s.
\end{equation*}

Now let us employ the midpoint rule; this yields
\begin{align*}
	u(\tau)-\ee^{\tau\wt{B}}f_{0} &= 
	\tau \ee^{\frac{\tau}{2}\wt B}\left(B(u(\tfrac{\tau}{2}))-B(f(\tfrac{\tau}{2}))\right)u(\tfrac{\tau}{2})+d \\
	&=
	\tau\left(B(u(\tfrac{\tau}{2}))-B(f(\tfrac{\tau}{2}))\right)f_0
	+\tfrac{\tau^2}{2}\varphi_1(\wt B)\left(B(E_{B}(\tfrac{\tau}{2}))-\wt B\right)	\\
	&\qquad+\tfrac{\tau^2}{2}\left(B(E_{B}(\tfrac{\tau}{2}))-\wt B\right)R_{1}\left(E_{B}(\cdot,f_{0})\right)
	+ d.
\end{align*}
The second term is bounded by assumption \eqref{eq:ass_-1} and the remainder term by assumption \eqref{eq:ass_-0.5}. We postpone the discussion of the first term until we have considered the terms of order 1.

%
%

\emph{Terms of order $1$}.
For
\begin{equation*}
g(s)=\ee^{(\tau-s)\wt B}A\ee^{s\wt B}f_0,\qquad k(s)=\partial_{2}E_{B}(\tau-s,f(s))AE_{B}(s,f_0)
\end{equation*}
we get (by use of the trapezoidal rule)
\begin{multline*}
\frac{\tau}{2}\Bigl(g(0)+g(\tau)\Bigr)-\int_{0}^{\tau}k(s)\, \mathrm{d}s \\
= \frac{\tau}{2}\Bigl(g(0)-k(0)+g(\tau)-k(\tau)\Bigr)-\frac{\tau^{3}}{2}\int_{0}^{1}\theta(1-\theta)k''(\theta \tau)\, \mathrm{d}\theta.
\end{multline*}
First, let us compare $g(\tau)$ and $k(\tau)$
\begin{equation*}
g(\tau)-k(\tau)= A\bigl(\ee^{\tau\wt B}f_{0}-E_{B}(\tau,f_{0})\bigr),
\end{equation*}
which is the same term that we encountered in \eqref{eq:zerodiff}, except that we have an additional $A$ to the left of the expression. We thus can apply assumption \eqref{eq:ass_1} with $i=1$. Second, we have to compare $g(0)$ and $k(0)$
\begin{equation*}
g(0)-k(0)=\bigl(\ee^{\tau \wt B}-\partial_2 E_{B}(\tau,f_{0})\bigr)Af_{0}.
\end{equation*}
Expanding both terms
\begin{align*}
\ee^{\tau \wt B} &= I + \tau\wt B +\tau^2\wt B^2 \varphi_2(\tau\wt B)\\
E_{B}(\tau,f_{0}) &= f_0 + \tau B f_0 + \tau^2 R_2(E_B(\cdot, f_0)),
\end{align*}
we get

\begin{align*}
	g(0)-k(0) &= -\tau B^{\prime}(Af_0)f_0  -\tau\bigl( B(f_0)-\wt B \bigr) Af_0  \\
		  &\qquad+ \tau^2 \left( \wt B^{2}\varphi_{2}(\tau\wt B)- R_{2}(\partial_{2} E_{B}(\cdot,f_0))\right)A f_{0}.
\end{align*}
The first term is bounded by assumption \eqref{eq:ass_0} and the second term by assumption \eqref{eq:ass_2}. The third term is
bounded by assumption \eqref{eq:ass_3} with $i=0$ and the fourth term by assumption \eqref{eq:ass_4}
with $i=0$.

Finally, we have to estimate the remainder term of the quadrature rule which is bounded by assumption~\eqref{eq:ass_5a}.

%
%

\emph{Terms of order $2$}.
For the functions
\begin{align*}	
g(s,\sigma)&=\ee^{(\tau-s)\wt B}A\ee^{(s-\sigma)\wt B}A\ee^{\sigma \wt B}f_{0}\\
k(s,\sigma)&=\partial_{2}E_{B}(\tau-s,f(s))A\partial_{2}E_{B}(s-\sigma,f(\sigma))AE_{B}(\sigma,f_{0})
\end{align*}
we employ a quadrature rule (as in \cite{Jahnke00})
\begin{multline*}
\frac{\tau^{2}}{8}\Bigl(g(0,0)+2g(\tau,0)+g(\tau,\tau)\Bigr)-\int_{0}^{\tau}\!\!\int_{0}^{s}k(s,\sigma)\, \mathrm{d}\sigma \mathrm{d}s \\
=\frac{\tau^{2}}{8}\Bigl(g(0,0)+2g(\tau,0)+g(\tau,\tau)-k(0,0)-2k(\tau,0)-k(\tau,\tau)\Bigr)+d,
\end{multline*}
where $d$ is the remainder term. Consequently, we have to bound
\begin{align*}
g(\tau,\tau)-k(\tau,\tau) &= A^{2}\left(e^{\tau\wt B}f_0-E_{B}(\tau,f_{0})\right), \\
g(0,0)-k(0,0) &= \left(e^{\tau\wt B}-\partial_{2}E_{B}(\tau,f_{0})\right)A^{2}f_0 \\
&= \tau \left(\wt B\varphi_1(\tau \wt B)-R_1(\partial_2 E_B(\cdot,f_0)) \right)A^2 f_0, \\
g(\tau,0)-k(\tau,0)&= A\left( \ee^{\tau\wt B}-\partial_{2}E_{B}(\tau,f_{0})\right) Af_0\\
&= \tau A\left(\wt B\varphi_1(\tau \wt B)-R_1(\partial_2 E_B(\cdot,f_0)) \right)A f_0.
\end{align*}
The first term can again be bounded by using assumption \eqref{eq:ass_1}, now with $i=2$. In addition, we can bound the second and third term using assumption \eqref{eq:ass_3} with $i=1$ and $i=2$ and assumption \eqref{eq:ass_4} with $i=1$ and $i=2$, respectively. Finally, the remainder term depends on the first partial derivatives of $k(s,\sigma)$ and can be bounded by \eqref{eq:ass_5b} and \eqref{eq:ass_5c}.

%
%

\emph{Terms of order $3$}.
In order to bound the remainder terms in the expansion of the exact solution as well as the	approximate solution, we need assumption \eqref{eq:ass_6} and \eqref{eq:ass_7} respectively.
\end{proof}


\section{Convergence analysis for the Vlasov--Poisson equations} \label{sec:vp}

We will consider the Vlasov--Poisson equations in 1+1 dimensions, i.e.
\begin{equation}\label{eq:vp1d}
\left\{
\begin{aligned}
\partial_t f(t,x,v) &= -v\partial_x f(t,x,v) - \mathcal E(f(t,\cdot,\cdot),x) \partial_v f(t,x,v) \\
\partial_x \mathcal E(f(t,\cdot,\cdot),x)   &= \int_{\mathbb{R}} f(t,x,v)\,\mathrm{d}v-1 \\
f(0,x,v)			&= f_0(x,v)
\end{aligned}
\right.
\end{equation}
with periodic boundary conditions in space. For a function $g=g(x,v)$ the abstract differential operators $A$ and $B$ of the previous sections have thus the form
\begin{equation*}
Ag(x,v) = -v\partial_x g(x,v),\qquad Bg(x,v) = - \mathcal E(g,x)\partial_v g(x,v).
\end{equation*}
The domain of interest is given by $(t,x,v)\in [0,T]\times [0,L]\times \mathbb{R}$. Thus, for all $x\in\mathbb{R}$
\begin{equation*}
f(t,x,v)=f(t,x+L,v).
\end{equation*}
By equation~\eqref{eq:vp1d} the electric field $\mathcal E$ is only determined up to a constant. This constant is chosen such that $\mathcal E$ has zero integral mean (electrostatic condition). As will be apparent in the next section it is unnecessary to impose boundary conditions in the velocity direction. This is due to the fact that for a function $f_0$ with compact support in the velocity direction the solution will continue to have compact support for all finite time intervals $[0,T]$ (see Theorem \ref{thm:regularity} below).

\subsection{Definitions and notation}

The purpose of this section is to introduce the notations and mathematical spaces necessary for giving existence,  uniqueness, and regularity results as well as to conduct the estimates necessary for showing consistency and stability.

For the convergence proof we will use the Banach space $L^1([0,L]\times\mathbb{R})$ exclusively. This is reasonable as the solution $f$ of~\eqref{eq:vp1d} represents a probability density function. As such the $L^1$ norm is conserved for the exact (as well as the approximate) solution. Nevertheless, all the estimations could be done as well, for example, in $L^\infty([0,L]\times\mathbb{R})$.

However, we need some regularity of the solution. This can be seen from the assumptions of Theorem~\ref{thm:abstract_consistency}, where we have to apply a number of differential operators to the solution $f(t)$. Thus, we introduce the following spaces of continuously differentiable functions
\begin{eqnarray*}
\mathcal{C}_{\mathrm{per,c}}^m
    &=& \left\{g \in \mathcal{C}^m(\mathbb{R}^2,\mathbb{R})\,;\, \forall x,v \colon (g(x+L,v)=g(x,v)) \land (\text{supp}\; g(x,\cdot) \text{ compact})  \right\}, \\
	\mathcal{C}_{\mathrm{per}}^m
	&=& \left\{g \in \mathcal{C}^m(\mathbb{R},\mathbb{R})\,;\, \forall x \colon g(x+L)=g(x) \right\}.
\end{eqnarray*}	
Together with the norm of uniform convergence of all derivatives up to order $m$, i.e.
\begin{equation*}
\|g\|_{\mathcal{C}_{\mathrm{per,c}}^m} = \sum_{0\le k+\ell\le m} \|\partial_x^k \partial_v^\ell g\|_\infty,\qquad
\|g\|_{\mathcal{C}_{\mathrm{per}}^m} = \sum_{k=0}^m \|\partial_x^k g\|_\infty,
\end{equation*}
the spaces $\mathcal{C}_{\text{per,c}}^m$ and $\mathcal{C}_{\text{per}}$ are turned into Banach spaces.

We also have to consider spaces that involve time. To that end let us define
\begin{eqnarray*}
	\mathcal{C}^m(0,T;C^m)
	&=& \left\{ f \in \mathcal{C}^m([0,T],C^0) \,;\,
	( f(t) \in C^m ) \land
	( \sup_{t\in[0,T]}\left\Vert f(t) \right\Vert_{C^{m}} < \infty )
	\right\},
\end{eqnarray*}
where $C^m$ is taken as either $\mathcal{C}_{\text{per,c}}^m$ or $\mathcal{C}_{\text{per}}^m$. It should be
noted that if it can be shown that the solution $f$ of the Vlasov--Poisson equations lies in the space $\mathcal{C}^m(0,T;C^m)$, we can bound all derivatives (in space) up to order $m$ uniformly in $t\in[0,T]$.


\subsection{Existence, uniqueness, and regularity}

In this section we recall the existence, uniqueness, and regularity results of the Vlasov--Poisson equations in 1+1 dimensions. The theorem is stated with a slightly different notation in \cite{Besse:2008} and \cite{Besse:2005}.

\begin{theorem} \label{thm:regularity}
Assume that $f_0 \in \mathcal{C}_{\mathrm{per,c}}^m$ is non-negative, then $f \in \mathcal{C}^m(0,T;\mathcal{C}_{\mathrm{per,c}}^m)$ and $\mathcal E(f(t,\cdot, \cdot),x)$ as a function of $(t,x)$ lies in $\mathcal{C}^m(0,T;\mathcal{C}_{\mathrm{per}}^m)$. In addition, we can find a number $Q(T)>0$ such that for all $t\in[0,T]$ and $x\in\mathbb{R}$ it holds that $\mathrm{supp} f(t,x,\cdot) \subset [-Q(T),Q(T)]$.
\end{theorem}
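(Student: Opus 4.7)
The plan is to prove Theorem \ref{thm:regularity} by the classical method of characteristics combined with a fixed-point argument, essentially reorganizing the arguments in \cite{Besse:2005,Besse:2008}. First I would introduce the characteristic flow $(X(t,x,v),V(t,x,v))$ solving the non-autonomous system $\dot X = V$, $\dot V = \mathcal E(f(t,\cdot,\cdot),X)$ with $X(0)=x$, $V(0)=v$. Since the transport equation in \eqref{eq:vp1d} is constant along its characteristics, any sufficiently regular solution must satisfy $f(t,X(t),V(t))=f_0(x,v)$; coupled with the Poisson equation $\partial_x\mathcal E=\int f\,\mathrm{d}v-1$ (and the zero-mean condition), this becomes a closed fixed-point equation for the pair $(\mathcal E,\Phi)$ in an appropriate space of continuous functions.

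Second, on a short time interval I would establish local existence and uniqueness by a contraction argument for $\mathcal E$ in $\mathcal{C}^0([0,\delta];\mathcal{C}_{\mathrm{per}}^0)$. The key a priori estimate controls the velocity support: if $\mathrm{supp}\,f_0(x,\cdot)\subset[-v_0,v_0]$, then $\rho(t,x)=\int f(t,x,v)\,\mathrm{d}v$ is bounded by $2Q(t)\|f_0\|_\infty$ with $Q(t)$ denoting the maximal $v$-support at time $t$. Integrating Poisson on the periodic interval yields $\|\mathcal E(t,\cdot)\|_\infty\le L\|\rho(t,\cdot)\|_\infty$, and from $|V(t)|\le v_0+\int_0^t\|\mathcal E(s,\cdot)\|_\infty\,\mathrm{d}s$ one obtains a Gronwall-type inequality for $Q(t)$ that is globally solvable on $[0,T]$. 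This bound on $\|\mathcal E\|_\infty$ rules out blow-up and lets the local solution be extended to all of $[0,T]$, producing the claimed $Q(T)$.

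Third, for the $\mathcal{C}^m$ regularity I would proceed by induction on $m$. The representation $f(t,x,v)=f_0(\Phi_t^{-1}(x,v))$ transfers spatial regularity of $f$ to that of the flow, and the flow's regularity is obtained by differentiating the characteristic ODE (the variational equations) and applying Gronwall to the resulting linear system driven by derivatives of $\mathcal E$. The crucial observation is that Poisson makes $\mathcal E$ gain one $x$-derivative relative to $\rho$, so $\mathcal E\in\mathcal{C}^m(0,T;\mathcal{C}_{\mathrm{per}}^m)$ follows once $f\in\mathcal{C}^m(0,T;\mathcal{C}_{\mathrm{per,c}}^m)$ has been established; conversely, the regularity of $\mathcal E$ feeds back into that of the flow at the same order, so no derivative is lost around the loop. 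Temporal regularity is then read off from the PDE itself by differentiating $\partial_t f=-v\partial_x f-\mathcal E\partial_v f$ in $t$ and using the already established spatial estimates.

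The main obstacle I anticipate is closing this induction, i.e.\ propagating the $\mathcal{C}^m$ estimates through the cycle flow $\to f \to \rho \to \mathcal E \to$ flow uniformly on $[0,T]$. Each differentiation of the flow requires controlling same-order derivatives of $\mathcal E$ along trajectories, and the Gronwall constants depend on lower-order quantities that must have been bounded in the previous induction step. Careful bookkeeping, together with the global-in-time control of $Q(T)$ and $\|\mathcal E\|_\infty$ from the second step, is what makes the induction close and yields exactly the spaces $\mathcal{C}^m(0,T;\mathcal{C}_{\mathrm{per,c}}^m)$ and $\mathcal{C}^m(0,T;\mathcal{C}_{\mathrm{per}}^m)$ in the statement.
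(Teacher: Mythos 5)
The paper does not prove this theorem itself: its ``proof'' is a one-line citation to \cite{glassey:1996}, and that reference (like the works of Besse et al.\ you invoke) establishes the result by exactly the characteristics--Gronwall--bootstrap argument you outline, so your route coincides with the intended one. Your sketch is structurally correct; the only blemishes are cosmetic --- the characteristic system should read $\dot V=-\mathcal E(f(t,\cdot,\cdot),X)$ to match the sign convention in \eqref{eq:vp1d}, and the closing of the $\mathcal{C}^m$ induction, which you rightly single out as the delicate point, is asserted rather than carried out --- neither of which is a gap relative to what the paper itself supplies.
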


\begin{proof}
A proof can be found in \cite{glassey:1996}.
\end{proof}

We also need a regularity result for the electric field that does not directly result from a solution of the Vlasov--Poisson equations, but from some generic function $g$ (e.g., computed from an application of a splitting operator to $f_0$).

\begin{corollary} \label{col:regB}
For $g \in  \mathcal{C}_{\mathrm{per,c}}^m$ it holds that $\mathcal E(g,\cdot)\in \mathcal{C}_{\mathrm{per}}^m$.
\end{corollary}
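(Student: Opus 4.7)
The plan is to factor the proof through the density $\rho(x)=\int_{\mathbb R}g(x,v)\,\mathrm dv$: first transfer the $\mathcal{C}^m$ regularity of $g$ in $x$ to $\rho$, then recover $\mathcal E(g,\cdot)$ by integrating in $x$.

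The preliminary step is to upgrade the pointwise compact support condition in the definition of $\mathcal{C}_{\mathrm{per,c}}^m$ to a uniform one: there exists $Q>0$ such that $\mathrm{supp}\,g(x,\cdot)\subset[-Q,Q]$ for every $x\in\mathbb R$. This follows by using periodicity in $x$ to restrict attention to the compact interval $[0,L]$ and then invoking continuity of $g$ on the cylinder $[0,L]\times\mathbb R$ (any larger $v$ support would force $g$ to vanish identically on an open region and can be absorbed by enlarging $Q$ on the compact $[0,L]$). With a uniform $Q$ in hand, $\rho(x)=\int_{-Q}^{Q}g(x,v)\,\mathrm dv$ is well-defined and the standard theorem on differentiation under the integral sign applies, because each $\partial_x^k g$ with $k\le m$ is continuous on the compact set $[0,L]\times[-Q,Q]$. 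Hence
\begin{equation*}
\partial_x^k\rho(x)=\int_{-Q}^{Q}\partial_x^k g(x,v)\,\mathrm dv,\qquad 0\le k\le m,
\end{equation*}
and each of these derivatives is continuous and $L$-periodic in $x$, so $\rho\in\mathcal{C}_{\mathrm{per}}^m$.

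Next, I would recover $\mathcal E(g,\cdot)$ as the unique mean-zero $L$-periodic antiderivative of $\rho-1$. Under the neutrality condition $\int_0^L(\rho-1)\,\mathrm dx=0$ (which is preserved by the Vlasov flow and by the individual splitting steps applied to admissible data), this antiderivative is given explicitly by
\begin{equation*}
\mathcal E(g,x)=\int_0^x(\rho(y)-1)\,\mathrm dy-\frac{1}{L}\int_0^L\int_0^z(\rho(y)-1)\,\mathrm dy\,\mathrm dz,
\end{equation*}
from which periodicity and the zero-mean normalization are immediate. Since antidifferentiation in $x$ raises smoothness by one order, $\mathcal E(g,\cdot)\in\mathcal{C}_{\mathrm{per}}^{m+1}\subset\mathcal{C}_{\mathrm{per}}^m$, which proves the claim.

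The only real hurdle is the uniform-support step; the differentiation under the integral sign and the inversion of $\partial_x$ on zero-mean periodic functions are entirely routine once that is settled.
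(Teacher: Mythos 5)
Your overall route is exactly the one the paper gestures at: the mean-zero antiderivative you write down is precisely the kernel representation $\mathcal E(g,x)=\int_0^L K(x,y)\bigl(\rho(y)-1\bigr)\,\mathrm dy$ of \eqref{eq:em_field}, and once $\rho\in\mathcal{C}^m_{\mathrm{per}}$ is established, integrating in $x$ (under the neutrality condition $\int_0^L(\rho-1)\,\mathrm dx=0$, which you correctly flag as needed for periodicity) even gives the extra derivative $\mathcal E(g,\cdot)\in\mathcal{C}^{m+1}_{\mathrm{per}}$. The problem is the step you yourself identify as the only real hurdle. Continuity of $g$ on $[0,L]\times\mathbb R$ does \emph{not} upgrade the slice-wise condition ``$\mathrm{supp}\,g(x,\cdot)$ compact for each $x$'' to a uniform bound $\mathrm{supp}\,g(x,\cdot)\subset[-Q,Q]$: the map $x\mapsto\mathrm{supp}\,g(x,\cdot)$ has no useful semicontinuity, and a union of compact slices over a compact base need not be bounded. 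Concretely, take disjoint intervals $I_n=[x_n,x_n+\ell_n]\subset[0,L]$ with $x_n\to x_0$, $\ell_n\to 0$, and bumps $g(x,v)=\sum_n a_n\,\phi\bigl((x-x_n)/\ell_n\bigr)\,\psi\bigl((v-c_n)/w_n\bigr)$ with $c_n\to\infty$. Every slice has compact support and $g$ is smooth (near any $(x_0,v_0)$ with $v_0$ finite it vanishes identically), yet the supports escape to $v=\infty$; choosing $a_n=\ell_n^{\,m}$ and $w_n=a_n^{-1}$ keeps all derivatives up to order $m$ bounded while $\rho(x)=\int g\,\mathrm dv$ stays bounded away from $0$ at the centers of the $I_n$ but vanishes at $x_0$, so $\rho$ is discontinuous. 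Your parenthetical argument (``any larger $v$ support would force $g$ to vanish\dots'') is therefore not a proof, and for the literal definition of $\mathcal{C}^m_{\mathrm{per,c}}$ the uniformity cannot be derived at all.

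The repair is not to prove the upgrade but to take it as given: the space $\mathcal{C}^m_{\mathrm{per,c}}$ is evidently intended with a uniform support bound, and every $g$ to which the corollary is applied in the paper has one --- Theorem~\ref{thm:regularity} supplies $Q(T)$ for the exact solution, and the flows $\ee^{tA}$, $\ee^{tB}$, $E_B(t,\cdot)$ act as translations in $x$ or $v$ by uniformly bounded amounts, so they preserve a uniform $Q$ up to enlargement. With such a $Q$ in hand, your differentiation under the integral sign and the antiderivative step are both correct and complete the proof along the same lines the paper intends.
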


\begin{proof}
The result follows from the proof of Theorem \ref{thm:regularity}. In addition, in the 1+1 dimensional case it can also be shown easily by starting from the exact representation of the electromagnetic field that is given in \eqref{eq:em_field} below.
\end{proof}

With the arguments contained in the proof of Theorem \ref{thm:regularity}, the regularity results given can be extended to the differential equations generated by $B$ and $\wt B$. Thus, Theorem \ref{thm:regularity} remains valid if $E_B(t,f_0)$ or $\mathrm{e}^{t\wt B}f_0$ is substituted for $f(t)$.

\subsection{Consistency}

The most challenging task in proving the assumptions of Theorem \ref{thm:abstract_consistency} is to control the derivative of $E_B$ with respect to the initial value. The following lemma accomplishes that.

\begin{lemma} \label{le:derivative_initial_value}
The map
\begin{eqnarray*}
\mathcal{C}^{m}_{\mathrm{per,c}}\times\mathcal{C}^{\ell}_{\mathrm{per,c}}&\to& \mathcal{C}^{\min(m-1,\ell)}_{\mathrm{per,c}} \\
(u_0,g)&\mapsto&\partial_{2}E_{B}(t,u_0)g,
\end{eqnarray*}
is well defined.
\end{lemma}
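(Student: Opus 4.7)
The plan is to realize $\partial_2 E_B(t,u_0)g$ as the solution of the variational equation obtained by linearizing the flow of $B$ around the trajectory $u(t) = E_B(t, u_0)$. Since $\mathcal{E}(\cdot, x)$ is linear in its first argument (up to the fixed background and zero-mean normalization), formally differentiating $u'(t) = -\mathcal{E}(u,x)\partial_v u$ with respect to $u_0$ in direction $g$ yields
\begin{equation*}
\partial_t w = -\mathcal{E}(u(t), x)\, \partial_v w - \mathcal{E}(w(t), x)\, \partial_v u(t), \qquad w(0) = g.
\end{equation*}

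First I would treat this as a linear, but nonlocal, transport equation in phase space. Because the drift $\mathcal{E}(u(t), x)$ depends only on $(t,x)$, the characteristics are affine in $v$ at fixed $x$, given explicitly by $V(t; x, v_0) = v_0 + \int_0^t \mathcal{E}(u(s),\cdot,\cdot),x)\,\mathrm{d}s$. Integration along characteristics produces a Volterra-type integral equation for $w$ whose nonlocal part (coming from $\mathcal{E}(w, x)$, which is a smoothing linear operator in $w$) is a bounded linear map on $\mathcal{C}^0_{\mathrm{per,c}}$. A Picard iteration, aided by the fact that the compact support in $v$ is preserved under the characteristic flow on $[0,T]$, yields existence and uniqueness of $w \in \mathcal{C}([0,T], \mathcal{C}^0_{\mathrm{per,c}})$.

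Next I would promote this solution to the stated regularity. Each application of $\partial_x$ or $\partial_v$ to the integral representation of $w$ costs one derivative of the drift (hence of $\mathcal{E}(u,\cdot)$, and so ultimately of $u_0$) through the characteristics $V$, and one derivative of the source term $\partial_v u$; by Theorem~\ref{thm:regularity}, extended to $B$ as indicated after Corollary~\ref{col:regB}, $u \in \mathcal{C}^m_{\mathrm{per,c}}$ when $u_0 \in \mathcal{C}^m_{\mathrm{per,c}}$, so $\partial_v u$ contributes up to $m-1$ derivatives. The remaining derivatives must come from $g$ through the initial condition, which caps the regularity of $w$ at $\ell$. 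An induction on the order of differentiation then gives $w(t) \in \mathcal{C}^{\min(m-1, \ell)}_{\mathrm{per,c}}$, matching the claimed target space.

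Finally I would verify that this $w$ is genuinely the Fr\'echet derivative of $u_0 \mapsto E_B(t,u_0)$. Writing $E_B(t, u_0 + g) - E_B(t, u_0) = w(t) + r(t)$ and subtracting the two evolution equations, the remainder $r$ satisfies the same linear variational equation with zero initial data plus a perturbation that is quadratic in $g$; a Gr\"onwall argument on $\|r\|_{\mathcal{C}^0_{\mathrm{per,c}}}$ then yields $\|r(t)\| = O(\|g\|^2)$, which identifies $w$ with $\partial_2 E_B(t,u_0)g$. The main obstacle is the regularity bookkeeping in step three: the nonlocal coupling via $\mathcal{E}(w,x)$ and the presence of $\partial_v u$ in the source force one to keep careful track of how many derivatives of $u_0$ versus $g$ are consumed, and it is precisely this balance that produces the $\min(m-1,\ell)$ bound.
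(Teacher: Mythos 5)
Your argument is correct in outline, but it takes a genuinely different and considerably longer route than the paper. The paper's proof hinges on one structural observation that you did not exploit: along the $B$-subflow the charge density $\int u(t)\,\mathrm{d}v$ is conserved (integrate $u'=Bu$ over $v$ and use compact support), so $\mathcal E(u(t,\cdot,\cdot),x)=\mathcal E(u_0,x)$ for all $t$. Consequently the flow is completely explicit, $E_B(t,u_0)(x,v)=u_0\bigl(x,v-t\,\mathcal E(u_0,x)\bigr)$, the characteristic shift $V_{u_0}$ depends affinely on $u_0$, and the G\^ateaux derivative can simply be written down:
\begin{equation*}
\partial_2E_B(t,u_0)g=(\partial_v u_0)\bigl(x,V_{u_0}(t)(x,v)\bigr)\bigl(V_g(t)(x,v)-v\bigr)+g\bigl(x,V_{u_0}(t)(x,v)\bigr),
\end{equation*}
from which the $\min(m-1,\ell)$ regularity is read off immediately (one $v$-derivative of $u_0$ in the first term, $g$ composed with a smooth map in the second). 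Your route instead sets up the variational (linearized) equation for $w=\partial_2E_B(t,u_0)g$, solves it by characteristics plus a Picard/Volterra argument, bootstraps the regularity inductively, and identifies $w$ with the derivative via a Gr\"onwall estimate on the quadratic remainder. All of these steps are feasible here (the nonlocal coupling $\mathcal E(w,x)$ is indeed a smoothing bounded operator, compact support in $v$ is preserved, and the derivative count correctly yields $\min(m-1,\ell)$), and your method has the advantage of not relying on the special conservation property, so it would generalize to subflows whose field is not time-invariant. The price is that the three substantial steps you only sketch --- existence for the Volterra equation, the inductive regularity promotion, and the Fr\'echet identification --- constitute essentially all of the work, whereas the paper's observation reduces the lemma to a two-line computation. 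If you carry out your plan, be careful with two small points: the linearization of $\mathcal E$ should use only its linear part (the background $-1$ drops out, as you note), and the Gr\"onwall step must absorb the full transport term $-\mathcal E(\tilde u)\partial_v r$ into the characteristics of the perturbed flow so that the remaining source is genuinely quadratic in $g$.
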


\begin{proof}
We consider $u^{\prime}(t)=Bu(t)$ with $u(0)=u_0$. Motivated by the method of characteristics we can write
\begin{eqnarray*}
V_{u_0}^{\prime}(t)&=&-\mathcal E(u(t,\cdot,\cdot),x) \\
V_{u_0}(0)&=&v \\
u(t,x,v)&=&u_{0}(x,V_{u_0}(t)(x,v)).
\end{eqnarray*}
To show that $V_{u_0}$ depends affinely on the initial value $u_0$, let us integrate $u'(t)=Bu(t)$ with respect to the velocity; this gives at once
\[
	\frac{\mathrm{d}}{\mathrm{d}t} \int_{-\infty}^{\infty} u(t) \,\mathrm{d}v =
	- \mathcal E(u(t,\cdot,\cdot),x) \int_{-\infty}^{\infty} \partial_v u(t) \,\mathrm{d}v
\]
which using integration by parts and the fact that $u(t)$ has compact support (in the velocity direction) shows that the time derivative on the left hand side vanishes. Therefore,
\[
	\mathcal E(u(t,\cdot,\cdot),x) = \mathcal E(u_0,x),
\]
from which the desired result follows.

Computing the G\^{a}teaux derivative with respect to the direction $g$ we get
\begin{align*}
	\partial_{h}E_{B}(t,u_{0}+hg)(x,v)\vert_{h=0} &= \left(\partial_{2}u_{0}\right)\left(x,V_{u_{0}}(t)(x,v)\right)\left(V_{g}(t)(x,v)-v\right) \\ 
	& \qquad+  g(x,V_{u_{0}}(t)(x,v)),
\end{align*}
since $V$ is affine with respect to the initial value. From this representation the result follows.
\end{proof}

%
%
%

The following two lemmas present time derivatives up to order two of $Bf$, $\wt Bf$ and $E_B(t,f_0)$  which follow from a simple calculation. Let us start with the derivatives of the operator $B$ and $\wt B$ applied to the exact solution $f(t)=f(t,\cdot,\cdot)$.

\begin{lemma} \label{le:diffB}
For $f$ sufficiently often continuously differentiable, we have
\begin{align*}
\partial_t Bf(t,x,v)
&= -\mathcal E\left(f^{\prime}(t),x\right)\partial_{v}f(t,x,v) - \mathcal E(f(t),x)\partial_{vt}f(t,x,v) \\
\partial_t^2 Bf(t,x,v)
&= -\mathcal E\left(f^{\prime\prime}(t),x\right)\partial_{v}f(t,x,v) \\ 
&\quad\, - 2\mathcal E\left(f^{\prime}(t),x\right)\partial_{vt}f(t,x,v) - \mathcal E(f(t),x)\partial_{vtt}f(t,x,v) 
\end{align*}
and
\begin{align*}
\partial_t \wt Bf(t,x,v)
&= -\mathcal E\left(f^{\prime}(t+\tfrac{\tau}2),x\right)\partial_{v}f(t,x,v) - \mathcal E(f(t+\tfrac{\tau}2),x)\partial_{vt}f(t,x,v) \\
\partial_t^2 \wt Bf(t,x,v)
&= -\mathcal E\left(f^{\prime\prime}(t+\tfrac{\tau}2),x\right)\partial_{v}f(t,x,v) \\
&\quad\, - 2\mathcal E\left(f^{\prime}(t+\tfrac{\tau}2),x\right)\partial_{vt}f(t,x,v) - \mathcal E(f(t+\tfrac{\tau}2),x)\partial_{vtt}f(t,x,v)
\end{align*}
\end{lemma}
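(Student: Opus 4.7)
The plan is to differentiate the defining expression $Bf(t,x,v) = -\mathcal E(f(t,\cdot,\cdot),x)\,\partial_v f(t,x,v)$ in $t$ using the Leibniz product rule. The essential ingredient is that the map $g \mapsto \mathcal E(g,\cdot)$ is \emph{linear} in $g$: from $\partial_x \mathcal E(g,x) = \int g(x,v)\,\mathrm{d}v - 1$ together with the electrostatic normalization (zero mean), $\mathcal E$ is obtained from $g$ by an integration against a fixed kernel plus the constant $-1$ term. Linearity allows the interchange
\begin{equation*}
\partial_t \mathcal E(f(t,\cdot,\cdot),x) = \mathcal E(\partial_t f(t,\cdot,\cdot),x) = \mathcal E(f'(t),x),
\end{equation*}
and analogously $\partial_t^2 \mathcal E(f(t),x) = \mathcal E(f''(t),x)$. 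The regularity needed to justify differentiation under the integral is supplied by the assumption that $f$ is sufficiently often continuously differentiable, together with Corollary \ref{col:regB}.

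First I would apply the product rule once to $Bf = -\mathcal E(f(t),x)\,\partial_v f(t,x,v)$: one term comes from differentiating the $\mathcal E$-factor and yields $-\mathcal E(f'(t),x)\,\partial_v f(t,x,v)$ by the interchange above, the other from differentiating $\partial_v f$ in $t$ and yields $-\mathcal E(f(t),x)\,\partial_{vt}f(t,x,v)$. This is exactly the first displayed identity. Next I would differentiate once more and collect the three resulting terms, which by the same interchange produce the coefficients $\mathcal E(f''(t),x)$, $2\mathcal E(f'(t),x)$, $\mathcal E(f(t),x)$ in front of $\partial_v f$, $\partial_{vt} f$, $\partial_{vtt} f$ respectively; the factor $2$ in front of the middle term is the usual Leibniz coefficient.

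The $\wt B$ identity is handled identically, with one purely notational difference: the expression $\wt B f(t,x,v) = -\mathcal E(f(t_k+\tfrac{\tau}{2}),x)\,\partial_v f(t,x,v)$ depends on $t$ only through the $\partial_v f(t,x,v)$ slot, while the argument of $\mathcal E$ is the frozen state $f(t_k+\tfrac{\tau}{2})$. However, the lemma as stated replaces this frozen state by $f(t+\tfrac{\tau}{2})$ so that $\partial_t$ acts on both slots; the same product rule and the same linearity of $\mathcal E$ then produce the stated formulas, with $f'$ and $f''$ evaluated at the shifted time $t+\tfrac{\tau}{2}$.

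There is no genuine obstacle here; the result is a direct Leibniz-rule computation once the linearity of $\mathcal E$ in its first argument is noted. The only point that requires a moment of care is keeping track of which time-slot the derivative acts on (especially for $\wt B$, where an unshifted and a shifted time appear simultaneously), and ensuring that the regularity hypotheses allow the interchange of $\partial_t$ with the integral defining $\mathcal E$, both of which are immediate from the hypotheses of the lemma together with Corollary \ref{col:regB}.
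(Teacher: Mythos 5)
Your proposal is correct and follows exactly the paper's (one-line) argument: differentiate $Bf=-\mathcal E(f(t),x)\partial_v f$ and $\wt Bf=-\mathcal E(f(t+\tfrac{\tau}{2}),x)\partial_v f$ by the product rule, using that $\mathcal E$ commutes with $\partial_t$ in its first slot. Your extra remarks on the (affine) kernel representation of $\mathcal E$ and on which time-slot the derivative acts on for $\wt B$ only make explicit what the paper leaves implicit.
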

\begin{proof}
From the relations $Bf(t,x,v)=-\mathcal E(f(t),x)\partial_{v}f(t,x,v))$ and $\wt B f(t,x,v) = -\mathcal E(f(t+\tfrac{\tau}2,x)\partial_{v}f(t,x,v)$ the result follows by the product rule.
\end{proof}

Further, we have to compute some derivatives of the evolution operator $E_B(t,f_0)$ with respect to time.

\begin{lemma} \label{le:deriv_deriv_initial_value} For $f$ sufficiently often continuously differentiable, we have
\begin{eqnarray*}
\partial_t E_B(t,f_0) &=& BE_B(t,f_0) \\
&=& - \mathcal E(E_B(t,f_0),\cdot)\partial_v E_B(t,f_0) \\
\partial_t^2 E_B(t,f_0) &=& -\mathcal E(E_B(t,f_0),\cdot) \partial_v (BE_B(t,f_0))
- \mathcal E(BE_B(t,f_0),\cdot) \partial_v E_B(t,f_0) \\
\partial_t (\partial_2E_B(t,f_0)) &=&  -\mathcal E(E_B(t,f_0),\cdot) \partial_v (\partial_2 E_B(t,f_0))
-\mathcal E(\partial_2 E_B(t,f_0),\cdot) \partial_v E_B(t,f_0) \\
\partial_t^2 (\partial_2E_B(t,f_0)) &=&	-\mathcal E(BE_B(t,f_0),\cdot)\partial_v (\partial_2 E_B(t,f_0)) \\
&& {}- \mathcal E(E_B(t,f_0),\cdot)\partial_v(\partial_t(\partial_2 E_B(t,f_0))) \\
&& {}- \mathcal E(\partial_t(\partial_2E_B(t,f_0)),\cdot) \partial_v E_B(t,f_0) \\
&& {}- \mathcal E(\partial_2 E_B(t,f_0),\cdot) \partial_v BE_B(t,f_0).
\end{eqnarray*}
\end{lemma}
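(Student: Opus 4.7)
The plan is to treat each of the four identities as a routine consequence of differentiating the defining relation $\partial_t E_B(t,f_0) = B\bigl(E_B(t,f_0)\bigr)E_B(t,f_0)$ and using two structural facts: (i) $Bf = -\mathcal E(f,\cdot)\partial_v f$ by the definition of $B$ in section \ref{sec:vp}; (ii) the functional $g\mapsto \mathcal E(g,\cdot)$ is \emph{linear} in its first argument, since it is obtained by solving the linear Poisson equation $\partial_x \mathcal E(g,x) = \int g\,\mathrm{d}v - 1$ with the electrostatic normalisation. The regularity needed to justify the repeated differentiation (and, for the third and fourth identities, the exchange $\partial_t \partial_2 = \partial_2 \partial_t$) is already available from Theorem \ref{thm:regularity} together with Lemma \ref{le:derivative_initial_value}.

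First I would establish the identity for $\partial_t E_B$: this is simply the definition of $E_B$ as the evolution operator of $u' = Bu$, combined with the explicit form of $B$. Next, for $\partial_t^2 E_B$, I would differentiate $-\mathcal E(E_B(t,f_0),\cdot)\partial_v E_B(t,f_0)$ with respect to $t$ using the product rule, and invoke the linearity of $\mathcal E$ in its first argument to write $\partial_t \mathcal E(E_B(t,f_0),\cdot) = \mathcal E(\partial_t E_B(t,f_0),\cdot) = \mathcal E(BE_B(t,f_0),\cdot)$. Similarly, $\partial_t \partial_v E_B(t,f_0) = \partial_v BE_B(t,f_0)$ after exchanging the order of the two derivatives, which is permitted by the regularity of $E_B$ supplied by Theorem \ref{thm:regularity}.

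For $\partial_t(\partial_2 E_B)$, I would exchange $\partial_t$ and $\partial_2$ (justified via Lemma \ref{le:derivative_initial_value}, which provides the required regularity of $\partial_2 E_B$), then apply $\partial_2$ to the identity $\partial_t E_B(t,f_0) = -\mathcal E(E_B(t,f_0),\cdot)\partial_v E_B(t,f_0)$. The product rule, together with the linearity of $\mathcal E$, yields exactly the two terms asserted. Finally, $\partial_t^2(\partial_2 E_B)$ is obtained by differentiating the expression for $\partial_t(\partial_2 E_B)$ once more in $t$: the product rule produces four contributions, in each of which $\partial_t$ falls on one of the factors $\mathcal E(E_B,\cdot)$, $\partial_v(\partial_2 E_B)$, $\mathcal E(\partial_2 E_B,\cdot)$, or $\partial_v E_B$, and one uses either $\partial_t E_B = BE_B$ or the commuted relation $\partial_t \partial_v = \partial_v \partial_t$ on $\partial_2 E_B$.

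The only potential obstacle is the legitimacy of swapping $\partial_t$ and $\partial_2$ in the third and fourth formulas. However, Lemma \ref{le:derivative_initial_value} provides an explicit representation of $\partial_2 E_B(t,f_0)g$ as a sum of smooth terms involving the characteristic flow $V_{u_0}$, which is itself smooth in $t$ by Theorem \ref{thm:regularity}; standard results on parameter-dependent ODEs then guarantee the required joint continuity of the mixed derivatives, so the interchange is valid and the computation reduces, as the paper indicates, to a mechanical application of the product rule.
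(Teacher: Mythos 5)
Your proposal is correct and takes essentially the same route as the paper, whose proof is the single sentence that the result ``follows by a simple calculation'' from $Bf=-\mathcal E(f,\cdot)\partial_v f$ via the product rule; you have merely spelled out that calculation, including the (legitimate) interchanges $\partial_t\partial_2=\partial_2\partial_t$ and $\partial_t\partial_v=\partial_v\partial_t$. One small quibble: because of the background charge term $-1$ in the Poisson equation, $g\mapsto\mathcal E(g,\cdot)$ is affine rather than linear, but its derivative is the linear part, which is exactly what the paper's notation $\mathcal E(\partial_t E_B,\cdot)$, $\mathcal E(\partial_2 E_B,\cdot)$, etc.\ denotes (cf.\ Lemma~\ref{le:diffB}), so your formulas are the intended ones.
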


\begin{proof}
From the relation $Bf(t,x,v)=-\mathcal E(f(t),x)\partial_{v}f(t,x,v))$ the result follows by a simple calculation.
\end{proof}

It is also necessary to investigate the behavior of the $\varphi$ functions introduced in Definition~\ref{def:varphi}.

\begin{lemma} \label{le:varphi}
For the Vlasov--Poisson equations the functions $\varphi_i(\tau E)$ with
$E\in\{A,\wt B\}$ are maps from $\mathcal{C}^m_{\mathrm{per,c}}$ to $\mathcal{C}^m_{\mathrm{per,c}}$ for all $\tau \geq 0$ and $i\in\mathbb{N}$.
\end{lemma}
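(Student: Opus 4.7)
The plan is to reduce the claim to the corresponding mapping property of the semigroups $\ee^{sA}$ and $\ee^{s\wt B}$, since by Definition \ref{def:varphi} we have $\varphi_0(\tau E)=\ee^{\tau E}$ and for $i\ge 1$
\[
\varphi_i(\tau E)g = \int_0^1 \ee^{(1-\theta)\tau E}g\cdot\tfrac{\theta^{i-1}}{(i-1)!}\,\mathrm{d}\theta,
\]
so it suffices to show that $\ee^{sE}:\mathcal{C}^m_{\mathrm{per,c}}\to\mathcal{C}^m_{\mathrm{per,c}}$ uniformly for $s\in[0,\tau]$, and then to move the $(x,v)$ derivatives through the $\theta$-integral.

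First I would write out both semigroups explicitly via the method of characteristics. For $A$, one has $\ee^{sA}g(x,v)=g(x-sv,v)$, which is plainly periodic in $x$, has support in $v$ equal to $\mathrm{supp}\,g(x-sv,\cdot)$ (still compact), and retains $\mathcal{C}^m$ regularity with bounds on derivatives up to order $m$ uniform in $s\in[0,\tau]$ via the chain rule. For $\wt B$, recall that $\wt B g=-\mathcal E(f(t_k+\tfrac{\tau}{2}),x)\,\partial_v g$; here $\mathcal E$ is a fixed (in $s$) function of $x$ lying in $\mathcal{C}^m_{\mathrm{per}}$ by Corollary \ref{col:regB} and Theorem \ref{thm:regularity}. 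Hence $\ee^{s\wt B}g(x,v)=g\bigl(x,\,v-s\mathcal E(x)\bigr)$. Periodicity in $x$ is inherited since $\mathcal E$ is periodic, compact support in $v$ is preserved because $\mathrm{supp}\,(\ee^{s\wt B}g)(x,\cdot)\subset\mathrm{supp}\,g(x,\cdot)+[-\tau\|\mathcal E\|_\infty,\tau\|\mathcal E\|_\infty]$, and $\mathcal{C}^m$ regularity with uniform (in $s$) bounds follows by applying the Faà di Bruno/chain rule to the composition, using that both $g$ and $\mathcal E$ have continuous derivatives up to order $m$.

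Having these uniform-in-$s$ bounds on $\|\ee^{sE}g\|_{\mathcal{C}^m_{\mathrm{per,c}}}$, differentiation under the integral sign in $\theta$ is justified by dominated convergence: every partial derivative $\partial_x^k\partial_v^\ell$ with $k+\ell\le m$ commutes with the $\theta$-integral, yielding continuity of the $m$th derivatives of $\varphi_i(\tau E)g$. Periodicity in $x$ passes trivially through the integral, and the support of $\varphi_i(\tau E)g(x,\cdot)$ is contained in the union over $\theta\in[0,1]$ of the supports of $\ee^{(1-\theta)\tau E}g(x,\cdot)$, which by the above lies in a fixed compact set independent of $\theta$. This gives $\varphi_i(\tau E)g\in\mathcal{C}^m_{\mathrm{per,c}}$.

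The only mildly delicate step is the $\wt B$ case, where the chain-rule bound involves arbitrary derivatives of the composition $v\mapsto v-s\mathcal E(x)$ and mixes $x$-derivatives of $\mathcal E$ with $(x,v)$-derivatives of $g$; but because $\mathcal E\in\mathcal{C}^m_{\mathrm{per}}$ has bounded derivatives up to order $m$ and the relevant $s$ lies in the bounded interval $[0,\tau]$, all resulting bounds are uniform in $s$, and the argument closes.
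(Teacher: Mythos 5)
Your proof is correct and takes essentially the same route as the paper: both identify $\ee^{sA}$ and $\ee^{s\wt B}$ as translations via the characteristics, observe that translations preserve periodicity, compact support, and $\mathcal{C}^m$ regularity, and then pass to the $\varphi_i$ through the integral representation \eqref{eq:varphi}. You merely spell out the uniform-in-$s$ bounds and the differentiation under the $\theta$-integral that the paper's two-line argument leaves implicit.
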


\begin{proof} For $i=0$ we have
\begin{equation*}
\ee^{-\tau v\partial_x}f_0(x,v) = f_0(x-\tau v,v),
\end{equation*}
and
\begin{equation*}
\ee^{-\tau \mathcal E\left(f\left(\tfrac{\tau}2\right),x\right)\partial_v}f_0(x,v)=f_0\left(x,v-\tau \mathcal E(f(\tfrac{\tau}2),x)\right).
\end{equation*}
This clearly doesn't change the differentiability properties. 

For the $\varphi$ functions the desired result follows at once from the representation given in~\eqref{eq:varphi}.
\end{proof}

Now we are able to show that all the assumptions of Theorem \ref{thm:abstract_consistency} are fulfilled and that we thus have consistency of order $2$. This is the content of the following theorem.

\begin{theorem} \label{thm:consistency_vp}
Suppose that $f_0 \in \mathcal{C}^3_{\mathrm{per,c}}$ is non-negative. Then the Strang splitting scheme~\eqref{eq:def-strang-one} for the Vlasov--Poisson equations is consistent of order 2.
\end{theorem}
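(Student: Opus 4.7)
The plan is to verify each of the nine consistency assumptions \eqref{eq:ass_-1}--\eqref{eq:ass_7} of Theorem \ref{thm:abstract_consistency} in the Vlasov--Poisson setting, using the regularity framework developed in this section. The starting point is Theorem \ref{thm:regularity} applied with $m=3$, which yields $f\in\mathcal{C}^3(0,T;\mathcal{C}^3_{\mathrm{per,c}})$ together with uniformly compact support of $f(t,x,\cdot)$ in a fixed interval $[-Q(T),Q(T)]$; by the remark following that theorem the same statements hold for $E_B(t,f_0)$ and $\mathrm{e}^{t\wt B}f_0$. Corollary \ref{col:regB} then furnishes the corresponding $\mathcal{C}^3_{\mathrm{per}}$ regularity of every electric field appearing in the proof, and Lemma \ref{le:varphi} guarantees that the $\varphi$ functions of $\tau A$ and $\tau\wt B$ leave $\mathcal{C}^3_{\mathrm{per,c}}$ invariant. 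Combined with the observation that $A=-v\partial_x$ maps $\mathcal{C}^m_{\mathrm{per,c}}$ boundedly into $\mathcal{C}^{m-1}_{\mathrm{per,c}}$ (with constant depending only on $Q(T)$), and with Lemma \ref{le:derivative_initial_value} for $\partial_2 E_B$, this supplies the uniform $L^1$-bounds required, provided the total number of spatial derivatives falling on $f_0$ in each assumption does not exceed three.

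The key Lipschitz observation, which drives the bulk of the argument, is that $(B(u)-B(v))g=-\mathcal{E}(u-v,\cdot)\,\partial_v g$ depends linearly on $u-v$, so that any difference of the form $B(u)-\wt B=B(u-f(\tfrac{\tau}{2}))$ is controlled by $\|u-f(\tfrac{\tau}{2})\|_{L^1}$ times the velocity derivative of the function to which the operator is applied. Combining this with the Taylor expansions $E_B(s,f_0)=f_0+sBf_0+O(s^2)$ and $f(s)=f_0+s(A+B)f_0+O(s^2)$ gives the identity
\begin{equation*}
E_B(\tfrac{\tau}{2},f_0)-f(\tfrac{\tau}{2})=-\tfrac{\tau}{2}Af_0+O(\tau^2),
\end{equation*}
which is the mechanism behind \eqref{eq:ass_-1}, \eqref{eq:ass_0}, and \eqref{eq:ass_1}; assumption \eqref{eq:ass_2} follows analogously from $f_0-f(\tfrac{\tau}{2})=O(\tau)$. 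The $\varphi$-function expansion $\mathrm{e}^{\tau\wt B}=I+\tau\wt B+\tau^2\wt B^2\varphi_2(\tau\wt B)$ and the Taylor expansion of $\partial_2 E_B(\tau,f_0)$ (with remainder $R_k$ expressed via Lemma \ref{le:deriv_deriv_initial_value}) reduce the ``prefactor'' assumptions \eqref{eq:ass_3} and \eqref{eq:ass_4} to checking that the inner derivative budget is respected; a case-by-case count for $i\in\{0,1,2\}$ shows that at worst three derivatives are consumed, which matches the regularity of $f_0$.

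The quadrature-remainder assumptions \eqref{eq:ass_-0.5} and \eqref{eq:ass_5a}--\eqref{eq:ass_5c}, as well as the order-three bounds \eqref{eq:ass_6} and \eqref{eq:ass_7}, require uniform estimates of second- and third-order time derivatives of compositions of $\partial_2 E_B$, $A$, $E_B$, and $\mathrm{e}^{s\wt B}$. These are supplied by Lemmas \ref{le:diffB} and \ref{le:deriv_deriv_initial_value}: each explicit formula there shows that a time derivative costs exactly one velocity derivative on the argument plus a time-differentiated electric field (which remains smooth by Corollary \ref{col:regB}), so the bookkeeping is again purely a matter of the three-derivative budget. The main technical obstacle I anticipate is in \eqref{eq:ass_4} with $i=2$, where $AR_1(\partial_2 E_B(\cdot,f_0))Af_0$ requires us to control $\partial_t\partial_2 E_B$ acting on $Af_0\in\mathcal{C}^2_{\mathrm{per,c}}$ followed by one more application of $A$; here Lemma \ref{le:derivative_initial_value}, together with the explicit form of $\partial_t\partial_2 E_B$ from Lemma \ref{le:deriv_deriv_initial_value}, suffices only because the velocity-derivative loss is exactly one. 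The triple anticommutator in \eqref{eq:ass_7} is handled similarly: each of the three $A$'s contributes only a $\partial_x$ (multiplied by $v$, bounded on the common support), so the expression lives in $\mathcal{C}^0_{\mathrm{per,c}}$ and hence in $L^1$. With all nine bounds in hand, Theorem \ref{thm:abstract_consistency} yields consistency of order two.
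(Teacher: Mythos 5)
Your proposal is correct and follows essentially the same route as the paper: it verifies each assumption of Theorem~\ref{thm:abstract_consistency} by combining the compact-support and regularity results (Theorem~\ref{thm:regularity}, Corollary~\ref{col:regB}) with a derivative-counting argument based on Lemmas~\ref{le:derivative_initial_value}--\ref{le:varphi}; in fact you give more detail than the paper does on where the powers of $\tau$ in \eqref{eq:ass_-1}, \eqref{eq:ass_0}, \eqref{eq:ass_1}, and \eqref{eq:ass_2} come from. One cosmetic remark: because the Poisson equation carries the background charge $-1$, the identity $(B(u)-B(v))g=-\mathcal{E}(u-v,\cdot)\,\partial_v g$ should read $(B(u)-B(v))g=-\bigl(\mathcal{E}(u,\cdot)-\mathcal{E}(v,\cdot)\bigr)\partial_v g$, whose right-hand side is still linear and $L^1$-Lipschitz in $u-v$, so your argument is unaffected.
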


\begin{proof}
	The proof proceeds by noting that the solution has compact support (for a finite time interval), i.e., we can estimate $v$ by some constant $Q$. On the other hand it is clear that for $f_0\in\mathcal{C}^{m+1}_{\mathrm{per,c}}$  we get $Af_0 \in \mathcal{C}^m_{\mathrm{per,c}}$ and $\wt B f_0 \in \mathcal{C}^m_{\mathrm{per,c}}$. The same is true for $B$ as can be seen by Corollary \ref{col:regB}. Therefore, we can establish the bounds \eqref{eq:ass_-1}, \eqref{eq:ass_-0.5}, and \eqref{eq:ass_0}.
Noting that, by Lemma \ref{le:diffB}, terms of the form 
$R_i(\partial_2 E_B)$ are mappings from $\mathcal{C}^{m+i}_{\mathrm{per,c}}$ to $\mathcal{C}^m_{\mathrm{per,c}}$ and that, by Lemma~\ref{le:varphi}, the $\varphi$ functions are mappings from $\mathcal{C}^m_{\mathrm{per,c}}$ to $\mathcal{C}^m_{\mathrm{per,c}}$ we can conclude that after applying all operators in assumptions \eqref{eq:ass_1}, \eqref{eq:ass_2}, \eqref{eq:ass_3}, and \eqref{eq:ass_4} we get a continuous function. By the regularity results we can bound these functions uniformly in time. The same argument also shows the validity of the bound in assumption \eqref{eq:ass_7}.
	
Finally, with the help of Lemmas \ref{le:derivative_initial_value} and \ref{le:deriv_deriv_initial_value} together with the above observations we can verify the bounds in assumptions \eqref{eq:ass_5a}, \eqref{eq:ass_5b}, \eqref{eq:ass_5c}, and \eqref{eq:ass_6}.
\end{proof}

\subsection{Stability}\label{subsec:stability}
We have to verify that the Strang splitting scheme \eqref{eq:def-strang-one} satisfies the conditions \eqref{eq:stability} and \eqref{eq:L-cond}. The stability bound~\eqref{eq:stability} is obviously fulfilled since
\begin{equation*}
\left\Vert \ee^{\frac{\tau}{2}A}\ee^{\tau B_{k+1/2}}\ee^{\frac{\tau}{2}A} f(t)\right\Vert_1 \leq \Vert f(t) \Vert_1.
\end{equation*}
This follows from the proof of Lemma \ref{le:varphi} as the above operators can be represented as translations only (note that a translation does not change the $L^1$ norm).

To verify~\eqref{eq:L-cond}, which can be seen as a substitute for non-linear stability, it remains to be shown that
\begin{equation*}
\Vert g\bigl(x,v-\tau\mathcal E (f_{k+1/2},x)\bigr) - g\bigl(x,v-\tau\mathcal E (f(t_k+\tfrac{\tau}2),x))\bigr) \Vert_1 \le \Vert f_{k+1/2} - f(t_k+\tfrac{\tau}2) \Vert_1
\end{equation*}
for $g(x,v) = \ee^{\frac{\tau}{2}A}f(t_k,x,v) = f(t_k, x-\tfrac{\tau}2 v,v)$. This follows at once from the Lipschitz continuity of both $\ee^{\frac{\tau}{2}A}f$ and $\mathcal E$.

\subsection{Convergence}

We are now in the position to prove second-order convergence of Strang splitting for the Vlasov--Poisson equations in $L^1$. The same result holds literally in $L^\infty$ (or any other $L^p$ space).

\begin{theorem}
Suppose that $f_0 \in \mathcal{C}^3_{\mathrm{per,c}}$ is non-negative and that the auxiliary method~\eqref{eq:midpoint} is first-order consistent and (locally) Lipschitz continuous with respect to its second argument. Then Strang splitting for the Vlasov--Poisson equations is second-order convergent.
\end{theorem}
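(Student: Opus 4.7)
The plan is to invoke the abstract convergence result, Theorem \ref{thm:convergence-abstract}, with $p=2$. To apply it I need to verify four items: (i) that the Strang splitting \eqref{eq:def-strang-one} is consistent of order $2$; (ii) that the stability bound \eqref{eq:stability} holds; (iii) that the Lipschitz-type estimate \eqref{eq:L-cond} is satisfied; and (iv) that the auxiliary method \eqref{eq:midpoint} is first-order consistent and locally Lipschitz continuous in its second argument. Once all four pieces are in hand, the conclusion is nothing more than a citation.

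For (i), the entire work has already been done in Theorem \ref{thm:consistency_vp}: its hypotheses are exactly that $f_0\in\mathcal{C}^3_{\mathrm{per,c}}$ and $f_0\ge 0$, which coincide with ours. For (ii) and (iii), I would quote the computations from section \ref{subsec:stability}: the stability bound follows because the half-flows $\mathrm{e}^{\frac{\tau}2 A}$ and $\mathrm{e}^{\tau B_{k+1/2}}$ act as pure translations (in $x$ and $v$ respectively, with the shift in $v$ depending on the frozen electric field $\mathcal E(f_{k+1/2},\cdot)$) and translations are $L^1$-isometries; the Lipschitz bound \eqref{eq:L-cond} follows from the Lipschitz dependence of the translation on the shift amount, combined with the Lipschitz dependence of $\mathcal E(\cdot,x)$ on its first argument (a standard consequence of the explicit Poisson formula, as recorded in Corollary \ref{col:regB}). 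For (iv), the statement is part of our hypothesis, so nothing needs to be checked.

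Assembling (i)–(iv) and quoting Theorem \ref{thm:convergence-abstract} yields the desired bound $\|f_k - f(t_k)\|_{L^1}\le C\tau^2$ uniformly for $0\le t_k\le T$. There is essentially no obstacle left at this stage — every nontrivial step has been handled in the earlier subsections, and this final theorem is really a bookkeeping statement. If I had to single out a subtle point, it would be the compatibility of regularity assumptions: the abstract consistency proof requires enough smoothness of $f_0$ for the $\varphi$-function and $R_k$ bounds in Theorem \ref{thm:abstract_consistency} to be finite, and $\mathcal{C}^3_{\mathrm{per,c}}$ has been chosen precisely so that, together with Lemma \ref{le:varphi} and the regularity propagation result Theorem \ref{thm:regularity}, all required derivatives of the flow remain in $\mathcal{C}^0_{\mathrm{per,c}}$ uniformly on $[0,T]$. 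With that compatibility in place, the proof collapses to a single appeal to the abstract theorem.
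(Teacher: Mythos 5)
Your proposal is correct and follows exactly the paper's own argument: cite Theorem \ref{thm:consistency_vp} for consistency, the bounds of section \ref{subsec:stability} for stability and the Lipschitz condition, and then conclude by Theorem \ref{thm:convergence-abstract} with $p=2$. No further comment is needed.
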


\begin{proof}
The result follows from Theorem \ref{thm:consistency_vp}, the bounds given in section~\ref{subsec:stability} and Theorem~\ref{thm:convergence-abstract}.
\end{proof}

Note that the two auxiliary methods \eqref{eq:f_h2} and \eqref{eq:f_h2-bis} below are indeed first-order consistent. If they are employed for the computation of $f_{k+1/2}$, the resulting Strang splitting is second-order convergent.

\section{Numerical experiments} \label{sec:implementation}

In this section we present some numerical experiments. Even if we neglect space discretization for the moment, we still have to settle the choice of $f_{k+1/2}$ which has to be a first-order approximation to $f(t_k+\tfrac{\tau}2)$. This can be achieved by Taylor series expansion, interpolation of previously computed values, or by making an additional Lie--Trotter time step of length $\tau/2$. Since we are interested in time integration only, we choose the latter. This method is trivial to implement (once the Strang splitting scheme is implemented) and doesn't suffer from the numerical differentiation problems of a Taylor expansion. Thus, one possible choice would be to use
\begin{equation}
f_{k+1/2} =  \ee^{\frac{\tau}{2}B(f_k)} \ee^{\frac{\tau}{2}A} f_k 	\label{eq:f_h2}
\end{equation}
in our simulations. That this is indeed a first-order approximation follows in the same way as our convergence proof for Strang splitting. We omit the details.

However, since the semigroup generated by $B(f_k)$ can be represented as a translation in velocity (see the proof of Lemma \ref{le:varphi}) and the electric field depends only on the average of the density function with respect to velocity (i.e., it depends only on the charge density), it is possible to drop the first factor in \eqref{eq:f_h2} without affecting the resulting electric field. Consequently, our choice is
\begin{equation}
f_{k+1/2} =  \ee^{\frac{\tau}{2}A} f_k. 	\label{eq:f_h2-bis}
\end{equation}
Since the computation of \eqref{eq:f_h2-bis} is the first step in the Strang splitting algorithm, this leads to a computationally efficient scheme. This scheme is also employed in \cite{Mangeney:2002}, for example. However, no argument why second-order accuracy is retained is given there.

To compute the electric field we will use the following formula (see e.g.~\cite{Besse:2008})
\begin{equation}\label{eq:em_field}
\begin{aligned}
\mathcal E(f(t,\cdot,\cdot),x) &= \int_0^L K(x,y) \left( \int_\mathbb{R} f(t,y,v)\mathrm{d}v - 1 \right) \mathrm{d}y, \\
K(x,y) &= \begin{cases}
\frac{y}{L}-1 & \ 0\leq x < y,\\
\frac{y}{L} & \ y<x\leq L.
\end{cases}
\end{aligned}
\end{equation}
For space discretization we will employ a discontinuous Galerkin method (based on the description given in \cite{Mangeney:2002}). The approximation is of second-order with 80 cells in both the space and velocity direction. In \cite{Mangeney:2002} the coefficients for discretizations up to order 2 are given. However, it is not difficult to employ a computer program to compute the coefficients for methods of arbitrary order.

\subsection{Landau damping}

The Vlasov--Poisson equations in 1+1 dimensions together with the initial value
\begin{equation*}
f_0(x,v)=\frac{1}{\sqrt{2 \pi}} \ee^{-v^2/2} \left( 1+ \alpha\cos(0.5 x)\right),
\end{equation*}
is called Landau damping. For $\alpha = 0.01$ the problem is called linear or weak Landau damping and for $\alpha=0.5$ it is referred to as strong or non-linear Landau damping. As can be seen, for example, in \cite{Filbet:2001,Crouseilles:2011} and \cite{Rossmanith:2011} Landau damping is a popular test problem for Vlasov codes. We solve this problem on the domain $(t,x,v)\in [0,1]\times[0,4\pi]\times [-6,6]$.

\begin{figure}[t]
\begin{center}
	\includegraphics{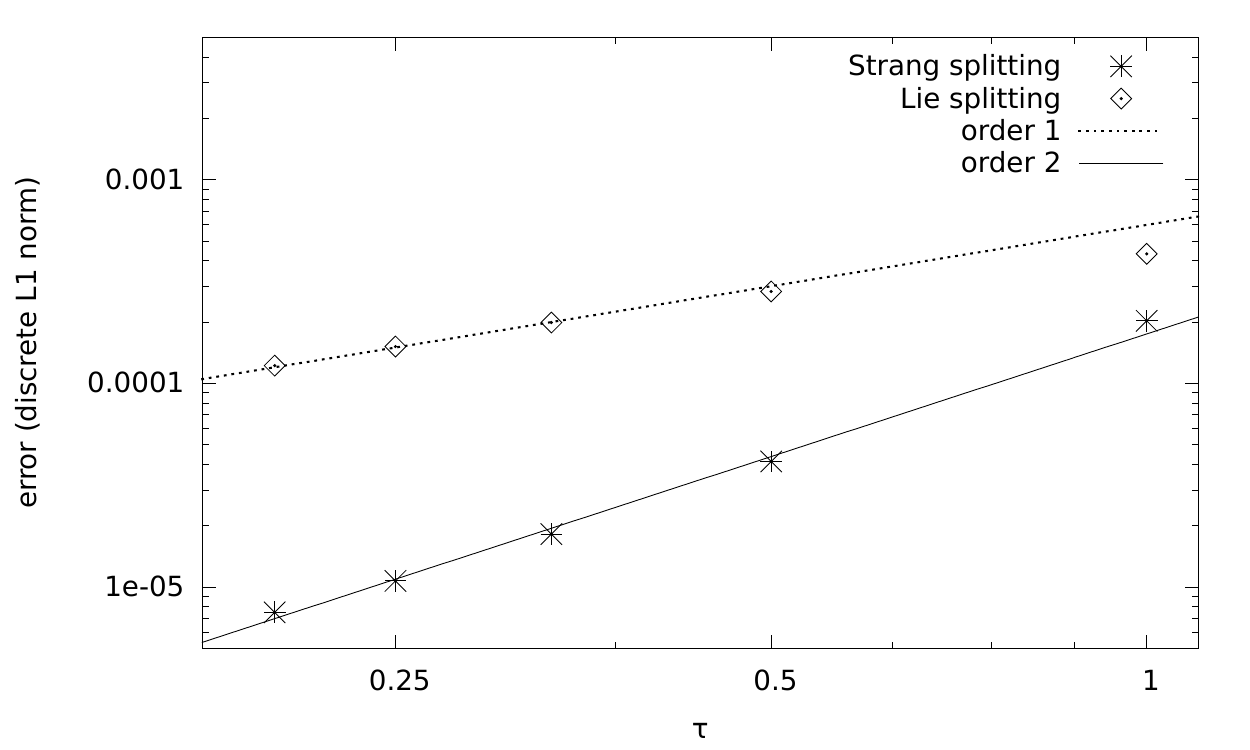}
\end{center}
\begin{center}
	\includegraphics{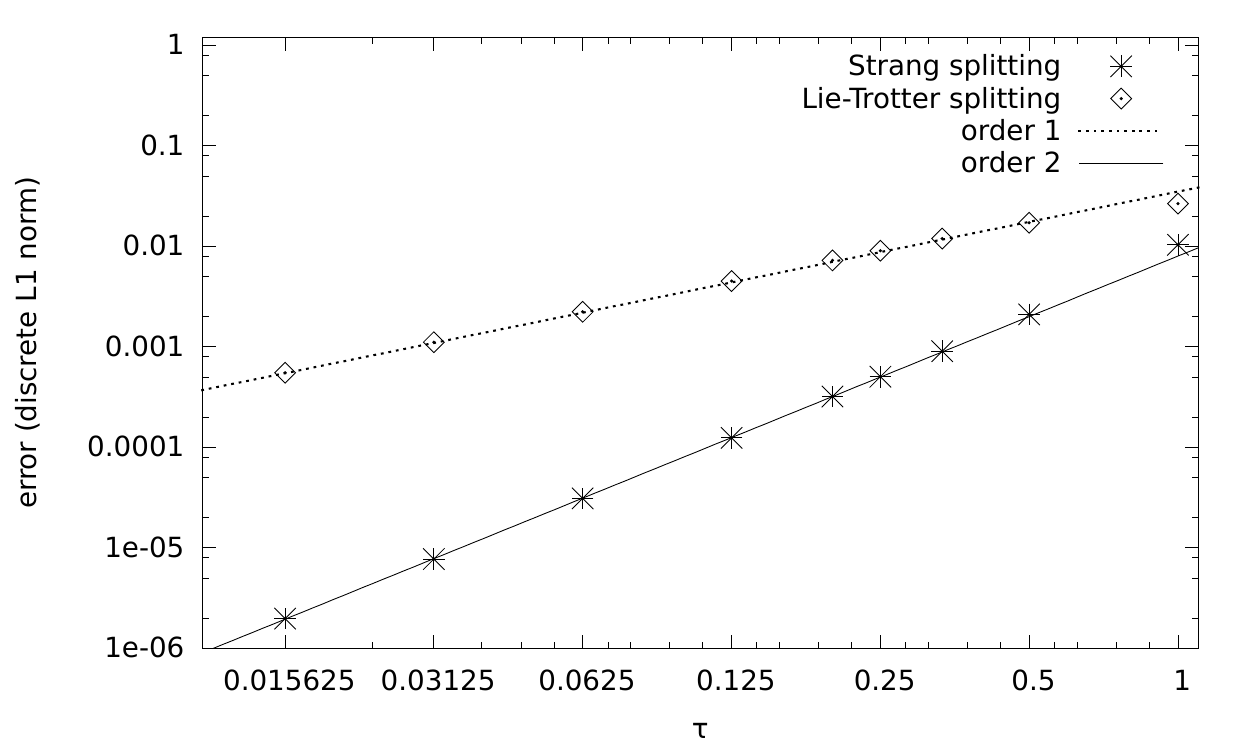}
\end{center}
\caption{Error of the particle density function $f(1,\cdot,\cdot)$ for Strang and Lie--Trotter splitting respectively, where $\alpha=0.01$ (top) and $\alpha=0.5$ (bottom).}
\label{diff}
\end{figure}

For comparison we display the error of the Strang splitting algorithm together with the error for first-order Lie--Trotter splitting. Since we are mainly interested in the time integration error and there is no analytical solution of the problem available, we compare the error for different step sizes with a reference solution computed with $\tau=3.9\cdot 10^{-3}$. The correctness of our code was verified with an upwind scheme on a fine grid with up to 2560 grid points in the $x$- and $v$-direction, respectively. For this experiment, the time step size was determined by the CFL condition to be approximately $\tau=6\cdot 10^{-4}$. The error is computed in the discrete $L^1$ norm at time $t=1$.
The results given in Figure \ref{diff} are in line with the theoretical convergence results derived in this paper.

\section{Conclusion}

In this paper sufficient conditions are given that guarantee convergence of order $2$ for the Strang splitting algorithm in the case of Vlasov-type equations. It is also shown that the Vlasov--Poisson equations in 1+1 dimensions is an example of a Vlasov-type equation, i.e., they fit into the framework of the analysis conducted. For the simulation on a computer, however, a further approximation has to be made (i.e., some sort of space discretization has to be introduced). This approximation is not included in the analysis done here. Nevertheless, the numerical experiments suggest that second-order convergence is retained in the fully discretized case as well.

\section*{Acknowledgments}
The authors thank the referees for providing numerous suggestions that helped to improve the presentation of this paper.

\bibliography{papers}

\begin{thebibliography}{10}

\bibitem{Belli:2006}
{\sc E.A. Belli}, {\em {Studies of numerical algorithms for gyrokinetics and
  the effects of shaping on plasma turbulence}}, PhD thesis, {Princeton
  University}, 2006.

\bibitem{Besse:2005}
{\sc N.~Besse}, {\em {Convergence of a semi-Lagrangian scheme for the
  one-dimensional Vlasov-Poisson system}}, SIAM J. Numer. Anal., 42 (2005),
  pp.~350--382.

\bibitem{Besse:2008}
\leavevmode\vrule height 2pt depth -1.6pt width 23pt, {\em {Convergence of a
  high-order semi-Lagrangian scheme with propagation of gradients for the
  one-dimensional Vlasov-Poisson system}}, SIAM J. Numer. Anal., 46 (2008),
  pp.~639--670.

\bibitem{Bostan:2009}
{\sc M.~Bostan and N.~Crouseilles}, {\em {Convergence of a semi-Lagrangian
  scheme for the reduced Vlasov-Maxwell system for laser-plasma interaction}},
  Numer. Math., 112 (2009), pp.~169--195.

\bibitem{Cheng:1776}
{\sc C.Z. Cheng and G.~Knorr}, {\em The integration of the {V}lasov equation in
  configuration space}, J. Comput. Phys., 22 (1976), pp.~330--351.

\bibitem{Crouseilles:2011}
{\sc N.~Crouseilles, E.~Faou, and M.~Mehrenberger}, {\em {High order
  Runge-Kutta-Nystr{\"o}m splitting methods for the Vlasov-Poisson equation}}.
\newblock \newline {http://hal.inria.fr/inria-00633934/PDF/cfm.pdf}.

\bibitem{Fahey:2008}
{\sc M.R. Fahey and J.~Candy}, {\em {GYRO}: A 5-d gyrokinetic-{M}axwell
  solver}, Proceedings of the ACM/IEEE SC2004 Conference,  (2008), p.~26.

\bibitem{Filbet:2001}
{\sc F.~Filbet and E.~Sonnendr{\"{u}}cker}, {\em {Comparison of Eulerian Vlasov
  solvers}}, Computer Physics Communications, 150 (2003), pp.~247--266.

\bibitem{glassey:1996}
{\sc R.~T. Glassey}, {\em {The Cauchy Problem in Kinetic Theory}}, SIAM, 1996.

\bibitem{Hahm:2009}
{\sc T.S. Hahm, L.~Wang, and J.~Madsen}, {\em {Fully electromagnetic nonlinear
  gyrokinetic equations for tokamak edge turbulence}}, Physics of Plasmas, 16
  (2009), p.~022305.

\bibitem{HanO2008}
{\sc E.~Hansen and A.~Ostermann}, {\em Dimension splitting for evolution
  equations}, Numer. Math., 108 (2008), pp.~557--570.

\bibitem{Heath:2011}
{\sc R.E. Heath, I.M. Gamba, P.J. Morrison, and C.~Michler}, {\em {A
  discontinuous Galerkin method for the Vlasov-Poisson system}}, J. Comput.
  Phys., 231 (2012), pp.~1140--1174.

\bibitem{Holden2012}
{\sc H.~Holden, C.~Lubich, and N.H. Risebro}, {\em Operator splitting for
  partial differential equations with {B}urgers nonlinearity}.
\newblock To appear in Math. Comp.

\bibitem{Jahnke00}
{\sc T.~Jahnke and C.~Lubich}, {\em Error bounds for exponential operator
  splittings}, BIT, 40 (2000), pp.~735--744.

\bibitem{lubich:2008}
{\sc C.~Lubich}, {\em {On splitting methods for Schr\"odinger-Poisson and cubic
  nonlinear Schr\"odinger equations}}, Math. Comput., 77 (2008),
  pp.~2141--2153.

\bibitem{Mangeney:2002}
{\sc A.~Mangeney, F.~Califano, C.~Cavazzoni, and P.~Travnicek}, {\em A
  numerical scheme for the integration of the {Vlasov-Maxwell} system of
  equations}, J. Comput. Phys., 179 (2002), pp.~495--538.

\bibitem{Respaud:2011}
{\sc T.~Respaud and E.~Sonnendr{\"{u}}cker}, {\em {Analysis of a new class of
  forward semi-Lagrangian schemes for the 1D Vlasov Poisson equations}}, Numer.
  Math., 118 (2011), pp.~329--366.

\bibitem{Rossmanith:2011}
{\sc J.A. Rossmanith and D.C. Seal}, {\em {A positivity-preserving high-order
  semi-Lagrangian discontinuous Galerkin scheme for the Vlasov-Poisson
  equations}}, J. Comput. Phys., 230 (2011), pp.~6203--6232.

\end{thebibliography}
\bibliographystyle{siam}

\clearpage

\end{document}